\pgfplotsset{soldot/.style={color=black,only marks,mark=*}}
\newdimen\epsfxsize
\newdimen\epsfysize
\newcommand{\be}{\begin{equation}}
\newcommand{\ee}{\end{equation}}
\newcommand{\bes}{\begin{equation*}}
\newcommand{\ees}{\end{equation*}}
\renewcommand{\l}{\lambda}
\newcommand{\g}{\gamma}
\newtheorem{thm}{Theorem}[section]
\newtheorem{prop}[thm]{Proposition}
\newtheorem{cor}[thm]{Corollary}
\newtheorem{lemma}[thm]{Lemma}
\newtheorem{question}[thm]{Question}
\newtheorem*{thmtrace}{Proposition \ref{trace}}
\theoremstyle{definition}
\def\1{{\bf 1}}
\begin{document}


\title{\bf Tangential Loewner hulls}
\bigskip
\author{{\bf Joan Lind}
\\
\\
}

\maketitle

\abstract{
Through the Loewner equation, real-valued driving functions generate sets called Loewner hulls.  We analyze driving functions that approach 0 at least as fast as $a (T-t)^r$ as $t \to T$, where $r \in (0, 1/2)$, and show that the corresponding Loewner hulls have tangential behavior at time $T$.
We also prove a result about trace existence and apply it to show that the Loewner hulls driven by $a(T-t)^r$ for $r \in (0,1/2)$ have a tangential trace curve.
}
\vspace{0.1in}




\tableofcontents

\bigskip

\section{Introduction and results}\label{intro}

The Loewner equation provides a correspondence between continuous functions (called driving functions) and certain families of growing sets (called hulls).  
We are interested in the question of how analytic properties of the driving functions affect geometric properties of the hulls, a question that has inspired much research (such as  \cite{MR}, \cite{L}, \cite{LMR}, \cite{W}, \cite{LT}, \cite{KLS}, \cite{ZZ}, among others.)

In this paper, we examine the end behavior of Loewner hulls driven by functions that are bounded below by $a (T-t)^r$, where $r \in (0, 1/2)$.  
We show that this results in tangential hull behavior at the end time (noting that by scaling, we may simply take $T=1$).

\begin{thm}\label{tangentialapproach}
Assume that $\l$ is a driving function defined on $[0,1]$ 
satisfying that  $\l(1) = 0$ and $\l(t) \geq a (1-t)^r$ for $a\geq 4$ and $r \in (0,1/2)$.  
Let $K_t$ be the Loewner hull generated by $\l$, and let $p=\inf \{ x \in K_1 \cap \mathbb{R} \}$.
Then near $p$, $K_1$ is contained in the region  $ \{x+iy \, : \, 0 \leq x, \, 0 \leq y \leq C (x-p)^{2-2r}\}$
for $C =C(a,r) > 0$.
\end{thm}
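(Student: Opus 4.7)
The plan is to fix $z=x+iy\in K_1$ near $p$ and track its Loewner trajectory $g_t(z)$, separately bounding $\Im z$ above and $\Re z-p$ below in terms of the absorption time $t^*=t^*(z)\le 1$, where $\epsilon:=1-t^*\to 0$ as $z\to p$. Writing $X_t=\Re g_t(z)-\lambda(t)$ and $Y_t=\Im g_t(z)$, the Loewner equation gives
\begin{equation*}
\dot X_t=\frac{2X_t}{X_t^2+Y_t^2}-\dot\lambda(t), \qquad \dot Y_t=-\frac{2Y_t}{X_t^2+Y_t^2}, \qquad X_{t^*}=Y_{t^*}=0,
\end{equation*}
to be used in integrated form since $\lambda$ is only required to be continuous.

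Step 1 (height bound $\Im z\le C_1 \epsilon^{1-r}$): the universal estimate $\Im z\le 2\sqrt\epsilon$ is too weak when $r<1/2$. The heuristic is that the tail hull $g_{t^*}(K_1\setminus K_{t^*})$ has driver beginning at $\lambda(t^*)\ge a\epsilon^r\gg\sqrt\epsilon$, so it is geometrically flat with vertical extent of order $\hcap/(\text{driver scale})\asymp\epsilon^{1-r}$. Rigorously, I would bootstrap: starting from the universal bound $Y_t\le 2\sqrt\epsilon$, use that $|X_t|$ stays comparable to $\lambda(t)\gtrsim(1-t)^r$ for most of $[0,t^*]$, integrate $(\log Y_t)'=-2/(X_t^2+Y_t^2)$ to improve the bound, and iterate.

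Step 2 (horizontal bound and conclusion): a preliminary calculation locates $p$ via the real trajectory $g_t(p)$. Setting $P_t=\lambda(t)-g_t(p)>0$, one has $\dot P_t=\dot\lambda(t)+2/P_t$ with $P_1=0$, and a perturbative expansion near $t=1$ in the exact case $\lambda=a(1-t)^r$ yields $P_t\sim (2/(ar))(1-t)^{1-r}$, hence $P_{t^*}\gtrsim\epsilon^{1-r}$ under the hypothesized lower bound. Combining the diameter lower bound $\diam(\text{tail})\ge 2\sqrt{2\epsilon}$ with the height bound from Step 1 (which forces the tail's vertical extent to be $o(\sqrt\epsilon)$), the horizontal spread of the tail, and hence $\Re z-p$, must be at least $c_2\sqrt\epsilon$. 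Combining: $\epsilon\le c_2^{-2}(\Re z-p)^2$, so $\Im z\le C_1\epsilon^{1-r}\le C_1 c_2^{-(2-2r)}(\Re z-p)^{2-2r}$, yielding the tangential estimate with $C=C(a,r)$. The condition $a\ge 4$ enters separately to ensure $p\ge 0$, so that the hull near $p$ lies in $\{x\ge 0\}$.

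The main technical obstacle is Step 1: beating the universal $\sqrt\epsilon$ height bound requires carefully tracking the regime where $|X_t|\gg Y_t$ (slow decay of $Y_t$) versus $|X_t|\lesssim Y_t$ (rapid decay), and showing that the lower bound $\lambda\ge a(1-t)^r$ forces the slow-decay regime to prevail long enough to yield the $\epsilon^{1-r}$ rate. A secondary subtlety in Step 2 is ensuring that $z$ and $p$ are approximately the horizontal extremes of the tail.
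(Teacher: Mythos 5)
Your decomposition—height $\lesssim\epsilon^{1-r}$ plus horizontal separation $\gtrsim\sqrt\epsilon$—captures the right ratio of exponents, but the mechanism you propose for Step~1 has a structural obstruction, and Step~2 is open at exactly the point you flagged. Integrating $(\log Y_t)'=-2/(X_t^2+Y_t^2)$ cannot by itself bound $Y_0=\Im z$: since $Y_{t^*}=0$, the integral diverges to $-\infty$ regardless of $Y_0$; and if $|X_t|\gtrsim(1-t)^r$ held on all of $[0,t^*]$, the integral would converge, forcing $Y_{t^*}>0$ so that $z$ would never be absorbed. So the ``slow-decay'' regime must break down near $t^*$, which is exactly where your estimate would need control, and a bootstrap along these lines would have to quantify that breakdown—a genuinely different problem. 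The paper never follows a trajectory into its singularity: Lemma~\ref{step1} rescales the mapped-down tail hull $\hat K_{s,1}=g_s(K_1\setminus K_s)$ and applies the Chen--Rohde estimate (Lemma~\ref{CRlem}) directly to the \emph{hull}, giving height at most $26a^{-1}(1-s)^{1-r}$, and Lemma~\ref{UpwardBounds}(ii) then shows the upward flow $f_s$ at most doubles the imaginary part of admissible points.

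In Step~2, a diameter lower bound for the tail only shows the tail is wide, not that $z$ and $p$ sit near its horizontal extremes; that is the missing ingredient, and it is supplied in the paper by Lemma~\ref{UpwardBounds}(iii): for $w\in\hat K_{s,1}$ with $\Re w - p_0^s\ge\sqrt{1-s}$ and small imaginary part, the separation $\Re(f_t(w))-\Re(f_t(p_0^s))$ is nondecreasing along the upward flow, so the initial gap $\sqrt{1-s}$ persists and gives $\Re z - p \ge \sqrt{1-s}$. Your formula $P_t\sim(2/(ar))(1-t)^{1-r}$ is correct for the exact driver but does not transfer to general $\lambda\ge a(1-t)^r$ without a comparison argument, and in any case it locates $p$ rather than controlling $\Re z - p$. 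Two further notes. The paper also needs a covering step—choosing, for each boundary point $z$, a time $s$ so that $g_s(z)$ lands in the window $[p_0^s+\sqrt{1-s},\,2\sqrt{1-s}]$—and this $s$ sits well before the absorption time $T(z)$, with $1-s\gtrsim(1-T(z))^{2r}\gg 1-T(z)$ when $r<1/2$; so the parameter $\epsilon=1-T(z)$ you work with is not the one in which both bounds naturally hold with the exponents you state. And $a\ge 4$ is used through Lemma~\ref{capture} to make $K_1\cap\mathbb R$ a nondegenerate interval (not just to force $p\ge 0$), while the upward-flow lemma actually requires $a\ge M(r)$ for a larger constant, the range $4\le a<M$ being handled by a separate rescaling reduction.
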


This is the counterpoint to a result in \cite{KLS} which analyzes the initial behavior of hulls driven by functions that begin faster than $at^r$ for $r \in (0,1/2)$ and shows that
  these hulls leave the real line tangentially.  
  The end-hull question, however, is slightly harder to analyze due to the influence of the past on hull growth.

We view Theorem \ref{tangentialapproach} as a partial extension of the following result from \cite{LMR} to the  $\kappa=\infty$ case.

\begin{thm}[Theorem 1.3 in \cite{LMR}]\label{LMRthm}
If $\l:[0,T] \to \mathbb{R}$ is sufficiently regular on $[0,T)$ and if 
$$\lim_{t \to T} \frac{|\l(T) - \l(t)|}{\sqrt{T-t}} = \kappa >4,$$
then the trace $\g$ driven by $\l$ satisfies that $\displaystyle \gamma(T) = \lim_{t \to T} \gamma(t)$
exists, is real, and $\g$ intersects $\mathbb{R}$ in the same angle as the trace for $\kappa\sqrt{1-t}$.
\end{thm}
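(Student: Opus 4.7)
By the standard Loewner scaling and translation we may reduce to $T=1$ and $\lambda(1)=0$. The plan is to perform a self-similar blow-up of $\lambda$ near $t=1$ and to compare the resulting rescaled hulls with those of the explicit self-similar driver $\kappa\sqrt{1-\sigma}$. For each $s\in[0,1)$, define the rescaled driver
$$\lambda^{(s)}(\sigma)\;=\;\frac{\lambda(s+(1-s)\sigma)}{\sqrt{1-s}},\qquad \sigma\in[0,1].$$
The hypothesis $|\lambda(t)|/\sqrt{1-t}\to\kappa$ as $t\to 1$ together with a direct computation yields $\lambda^{(s)}(\sigma)\to\pm\kappa\sqrt{1-\sigma}$ uniformly on compact subsets of $[0,1)$. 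By the concatenation property of the Loewner equation, combined with Loewner scaling, the hull generated by $\lambda^{(s)}$ is obtained from the tail piece $K_1\setminus K_s$ by applying the conformal map $g_s$, recentering at $\lambda(s)$, and rescaling by $(1-s)^{-1/2}$.

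Continuity of the Loewner correspondence under uniform driver convergence then implies that the rescaled hulls converge in the Carath\'eodory sense to the model hull of $\pm\kappa\sqrt{1-\sigma}$. For $\kappa>4$, an explicit self-similar computation of Kufarev type shows that the model trace is a straight line segment which meets $\mathbb{R}$ at a definite angle $\alpha(\kappa)$, and its entire hull collapses to a single real point at $\sigma=1$. Existence of $\gamma(1)=\lim_{s\to 1}\gamma(s)$ as a real point then follows by pulling this convergence back through the (increasingly localized) maps $g_s$. The angle claim follows by reading off the direction in which the rescaled original trace approaches $\mathbb{R}$: after rescaling by $(1-s)^{-1/2}$, the vector $\gamma(s)-\gamma(1)$ is forced to align with the model trace, giving $\arg(\gamma(s)-\gamma(1))\to\alpha(\kappa)$.

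The principal obstacle is upgrading Carath\'eodory (interior) convergence of the rescaled hulls to convergence at the tips, where both the original and the model hulls touch $\mathbb{R}$. This requires uniform-in-$s$ modulus-of-continuity estimates for the traces of $\lambda^{(s)}$ near $\sigma=1$, which one must extract from the ``sufficiently regular'' hypothesis on $\lambda$ together with the transversality furnished by $\kappa>4$. A natural route is to show that the family $\{\lambda^{(s)}\}$ lies uniformly in a driver class (for instance, a Marshall--Rohde-type half-Lipschitz class with small enough norm away from the endpoint) for which the trace is known to be a simple curve with uniformly controlled geometry; tip convergence, and hence the angle assertion, then follow.
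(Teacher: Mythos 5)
This statement is Theorem 1.3 of Lind--Marshall--Rohde, quoted in the present paper as background; the paper does not prove it, so there is no ``paper's own proof'' to compare against. That said, your sketch is worth commenting on because its central difficulty is exactly where the LMR argument does its work, and because a couple of your assertions are factually off.

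Your blow-up normalization $\l^{(s)}(\sigma) = \l(s+(1-s)\sigma)/\sqrt{1-s}$ is the right object (it is precisely the $\hat\l$ used in Lemma~\ref{step1} of the present paper), and your computation that $\l^{(s)} \to \pm\kappa\sqrt{1-\sigma}$ locally uniformly on $[0,1)$ is correct. But the parenthetical ``recentering at $\l(s)$'' is inconsistent with this definition: if you subtract $\l(s)$ before dividing by $\sqrt{1-s}$, the rescaled driver starts at $0$ and ends near $\mp\kappa$, which is a time-reversed version of the model, not $\kappa\sqrt{1-\sigma}$. The concatenation plus scaling properties give $\frac{1}{\sqrt{1-s}}g_s(K_{s+(1-s)\sigma}\setminus K_s)$ directly without recentering; keep it that way.

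Two more substantive problems. First, your statement that ``the model trace is a straight line segment'' is wrong. The Kufarev-type straight-line computation applies to $c\sqrt{t}$ with $c<4$ starting at a real point; for $\kappa\sqrt{1-t}$ with $\kappa>4$, the trace is a genuinely curved arc in $\H$ that returns to $\R$ at a well-defined nonzero angle, and the final hull $K_1$ is the closed region under that arc together with a real interval (see the bottom-right panel of Figure~\ref{Examples}). So the model's ``entire hull'' does not ``collapse to a single real point''; only the tip does, and extracting the angle from the model requires the explicit conformal description of these constant-curvature hulls, which is nontrivial. Second, and more seriously, Carath\'eodory convergence of the rescaled domains controls the conformal maps on compacta of the interior but says essentially nothing about behavior at the tip, which is a boundary point. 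You flag this gap yourself, but it is not a loose end to be patched; it is the theorem. Making the passage from interior convergence to tip and tangent-vector convergence requires quantitative estimates on the boundary behavior of $g_t$ uniformly in $t$ near $T$, using the transversality $\kappa>4$, and this is the technical heart of the LMR proof. As written, your sketch establishes the plausible geometric picture but stops short of every assertion in the theorem's conclusion.
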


Theorem \ref{tangentialapproach} addresses the approach to $\mathbb{R}$, 
but it does not address the question of the existence of a trace.
To give a fuller extension, we address the existence of the trace in the following result.

\begin{prop}\label{trace}
If $\l:[0,T] \to \mathbb{R}$ is sufficiently nice on $[0,T)$ 
with $ |\lambda(T)-\lambda(t)| \geq 4\sqrt{T-t}$ for all $t \in [0,T)$,
then the trace $\g$ driven by $\l$ satisfies that $\displaystyle \gamma(T) = \lim_{t \to T} \gamma(t)$
exists and is real.
\end{prop}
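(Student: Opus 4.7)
The plan is to show that $\diam(K_1 \setminus K_t) \to 0$ as $t \to 1$. Since $\g(s) \in \overline{K_1 \setminus K_t}$ for every $s \in [t,1)$, this alone forces $\g$ to be Cauchy at $1$ and so to converge to some $\g(1) \in \overline{K_1}$; a companion bound on $\sup\{\Im z : z \in K_1 \setminus K_t\}$ will then force $\g(1) \in \R$.

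For the preliminary reduction, $|\l(T) - \l(t)| > 0$ for $t<T$ combined with continuity of $\l$ prevents $\l - \l(T)$ from changing sign on $[0,T)$, so after translation and rescaling one may take $T=1$, $\l(1)=0$, and $\l(t) \geq 4\sqrt{1-t}$. For the diameter estimate, I would fix $t \in [0,1)$ and push the future hull through $g_t$: the set $L_t := g_t(K_1 \setminus K_t) \subset \overline{\H}$ is the Loewner hull on $[t,1]$ driven by $\l|_{[t,1]}$, based at $\l(t)$. The scale change $w \mapsto (w-\l(t))/\sqrt{1-t}$ together with the time change $s = (\tau - t)/(1-t)$ turn this into a unit-time hull driven by $\hat\l_t(s) := \l(t+s(1-t))/\sqrt{1-t}$, which again satisfies $\hat\l_t(s) \geq 4\sqrt{1-s}$. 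Reading ``sufficiently nice'' as including a uniform H\"older-$1/2$ upper bound on $\l$ makes $\{\hat\l_t\}_{t<1}$ uniformly bounded, and the standard hull-size estimate then gives $L_t \subset B(\l(t),\, R\sqrt{1-t}) \cap \overline{\H}$ with $R$ independent of $t$.

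The main obstacle is transferring this size bound through the conformal map $\phi_t := g_t^{-1}$, since $\phi_t$ can a priori magnify distances sharply near its boundary point $\l(t)$ depending on the local geometry of $K_t$ at its tip $\g(t)$. The useful consequence of the hypothesis having held throughout $[0,t]$, rather than only near $T$, is that the previously swallowed portion of $K_t$ has been pushed a definite distance from $\l(t)$, which should preclude a cusp flare of $K_t$ at $\g(t)$. I would quantify this by tracking the centered real-line flow $Y_\tau(x) := g_\tau(x) - \l(\tau)$ satisfying $\partial_\tau Y_\tau = 2/Y_\tau - \l'(\tau)$ for $x$ just past $\l(t)$, and comparing with the explicit solution generated by the critical driver $4\sqrt{1-\tau}$; this should produce a uniform derivative bound on $\phi_t$ over the half-disc $B(\l(t), R\sqrt{1-t}) \cap \H$, from which both $\diam(K_1 \setminus K_t) \to 0$ and $\sup_{z \in K_1 \setminus K_t} \Im z \to 0$ follow, completing the proof.
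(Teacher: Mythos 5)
There is a genuine gap at the very first step: the quantity $\diam(K_1 \setminus K_t)$ does \emph{not} tend to $0$ as $t \to 1$, so the whole scheme collapses. To see this, note that under the hypothesis (combined with Lemma \ref{capture}), $K_1$ contains a non-degenerate real interval $[p,\l(0)]$ that is swallowed at time $T=1$ and no earlier; since $K_t$ for $t<1$ is a simple curve (Theorem \ref{cis4}), $K_1\setminus K_t$ always contains $[p,\l(0))$, hence $\diam(K_1\setminus K_t)\geq \l(0)-p>0$ for all $t<1$. Your scale bound $L_t=g_t(K_1\setminus K_t)\subset B(\l(t),R\sqrt{1-t})$ is essentially right (it is the content of Lemma \ref{step1}), but juxtaposed with the previous sentence it shows that $\phi_t=g_t^{-1}$ must expand some portion of $B(\l(t),R\sqrt{1-t})\cap\H$ by a factor $\gtrsim 1/\sqrt{1-t}$. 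Concretely, $g_t$ maps the fixed-length interval $[p,\l(0))$ into an interval of length $\asymp\sqrt{1-t}$ abutting $\l(t)$, so the uniform derivative bound for $\phi_t$ that your argument needs does not exist. Having the hypothesis hold on all of $[0,t]$ does not ``preclude a cusp flare''; in fact Theorem \ref{tangentialapproach} says a cusp is exactly what develops, and the compression of $g_t$ near the tip is the geometric expression of this.

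The paper's proof has to avoid this obstruction entirely, and does so by working with prime ends rather than diameters. It first uses Lemma \ref{ZZlem}: a real point $q\in(p,\l(0))$ can only be a limit point of $\g(t)$ if $\liminf_{t\to1}(\l(t)-g_t(q))/\sqrt{1-t}=0$. It then shows this is impossible using the Loewner curvature comparison principle (Theorem \ref{LCthm}), by finding a constant-curvature driver $\alpha+c\sqrt{\tau-t}$ whose hull traps $g_s(q)$ but which the trace cannot cross. Finally, limit points in $\H$ are ruled out by a monotonicity argument (the trace would have to oscillate between its two sides). This is also why the paper's ``sufficiently nice'' is a Loewner-curvature condition, not the Hölder upper bound you guessed: the curvature bounds furnish both the monotonicity and the comparison hull. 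Your instinct to track the real-line flow $Y_\tau=g_\tau(x)-\l(\tau)$ is the right one, but the useful conclusion from it is the dichotomy of Lemma \ref{ZZlem} (either $q$ is well-separated from $\l(t)$ in scaled units, or $q$ is interior to the swallowed region and hence not a limit point of $\g$), not a distortion bound for $\phi_t$.
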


The needed assumption of Proposition \ref{trace}, which utilizes the notion of Loewner curvature introduced in \cite{LRcurv}, will be made explicit in Section \ref{Tr}.  
Taken together, Theorem \ref{tangentialapproach} and Proposition \ref{trace} provide an understanding of the hulls driven by functions $a(T-t)^r$, as illustrated in Figure \ref{thirdroot}.

\begin{figure}
\centering
\includegraphics[scale=.45]{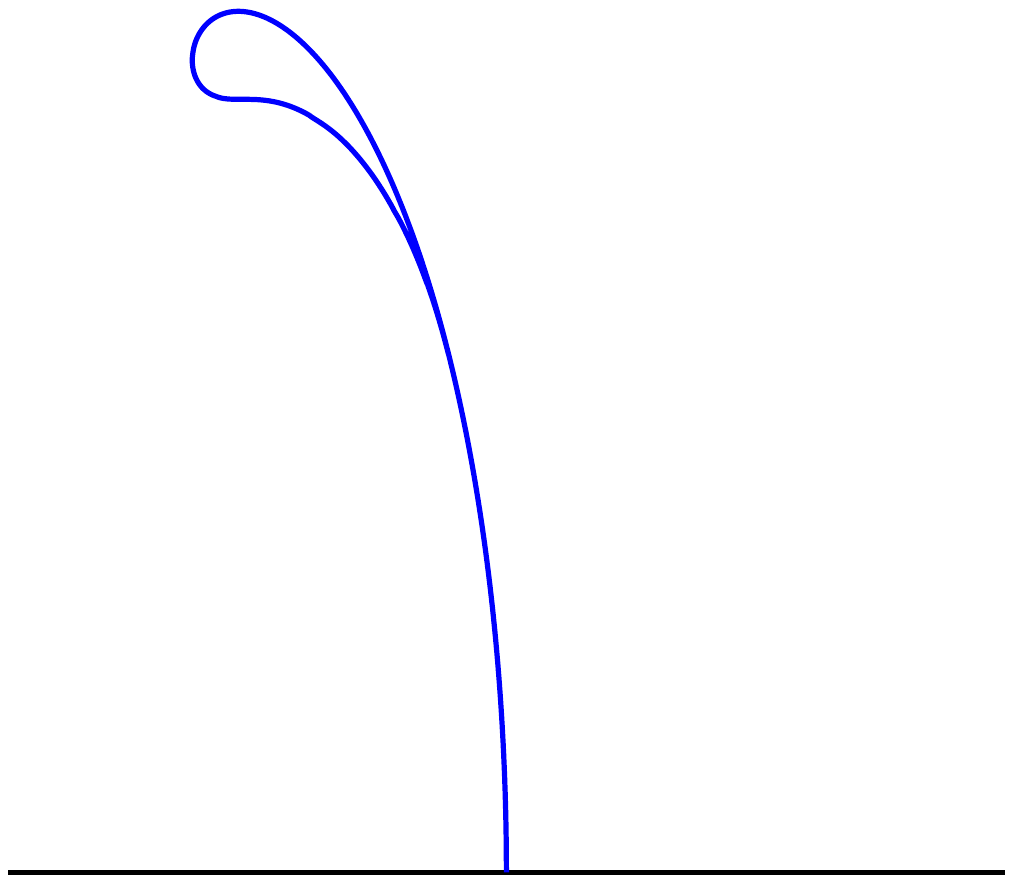} \hspace{0.4in}
\includegraphics[scale=.45]{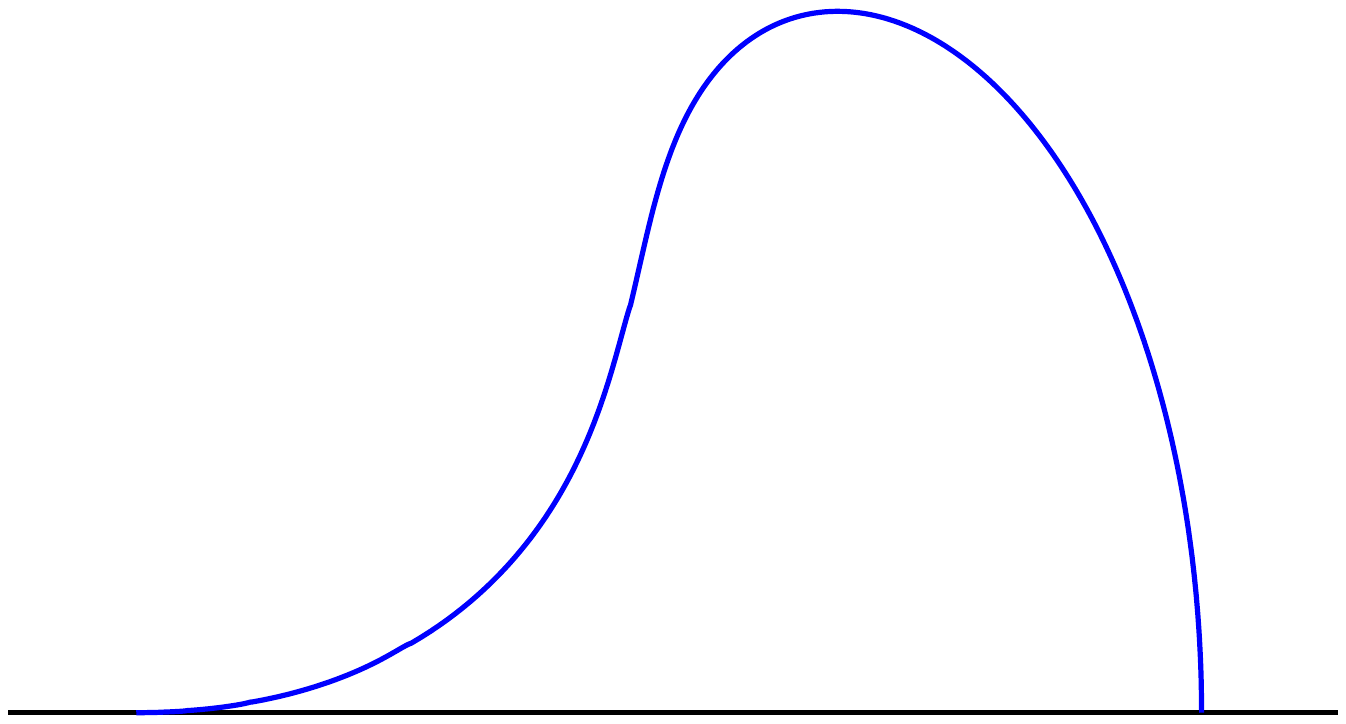}
\caption{The trace curve driven by $a(1-t)^{1/3}$ hits back on itself tangentially when  $a=2.5$ (left) and hits $\mathbb{R}$ tangentially when $a=4$ (right).
} \label{thirdroot}
\end{figure}

\begin{cor}\label{rdrivers}
Let $a\neq 0$ and $r \in (0, 1/2)$.  The Loewner hulls generated by $\lambda(t) = a(T-t)^r$ have a trace curve for $t \in [0,T]$.  This curve approaches the real line or itself tangentially as $t \to T$.
\end{cor}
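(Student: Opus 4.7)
By the Loewner reflection $z \mapsto -\bar z$, which sends $\lambda$ to $-\lambda$, we may assume $a>0$; by time rescaling, we further fix $T=1$. Since $\lambda(t) = a(1-t)^r$ is $C^\infty$ on $[0,1)$, standard results give a simple trace $\gamma:[0,1) \to \overline{\H}$, so the entire content of the corollary lies in the endpoint behavior as $t \to 1$. My plan is to combine Theorem \ref{tangentialapproach} with Proposition \ref{trace} through a restart-and-rescale argument that handles all $a>0$, not merely $a\geq 4$.

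For $t_0 \in (0,1)$, the concatenation property of the Loewner equation identifies $g_{t_0}(\gamma([t_0,1]))$ with the Loewner trace driven by the shifted driver $\hat\lambda(s) = a(1-t_0-s)^r$ on $s \in [0, 1-t_0]$. The Loewner scaling $(z,t) \mapsto (z/\sqrt{1-t_0},\, t/(1-t_0))$ then normalizes time to $[0,1]$ and converts $\hat\lambda$ into $\bar\lambda(u) = \bar a(1-u)^r$ with effective coefficient $\bar a = a(1-t_0)^{r-1/2}$. Since $r<1/2$, we have $\bar a \to \infty$ as $t_0 \to 1$, so I fix $t_0$ close enough to $1$ that $\bar a \geq 4$. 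For this $\bar\lambda$, the bound $|\bar\lambda(1)-\bar\lambda(u)| = \bar a(1-u)^r \geq 4\sqrt{1-u}$ (which reduces to $\bar a \geq 4$ since $(1-u)^{r-1/2} \geq 1$ on $[0,1]$) supplies the hypothesis of Proposition \ref{trace}, while $\bar\lambda(u) \geq \bar a(1-u)^r$ with $\bar a \geq 4$ supplies the hypothesis of Theorem \ref{tangentialapproach}. Proposition \ref{trace} produces $\bar\gamma(1) \in \R$, and Theorem \ref{tangentialapproach} confines the rescaled hull $\bar K_1$ to the cusp $\{x+iy : 0 \leq x,\, 0 \leq y \leq C(x-\bar p)^{2-2r}\}$ near $\bar p := \inf\{x \in \bar K_1 \cap \R\}$.

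Undoing the scaling and then applying $g_{t_0}^{-1}$ transports these conclusions to the original picture: the limit $\gamma(1) \in \overline{\H}$ exists and $K_1$ satisfies an analogous cuspidal containment near $\gamma(1)$. If $\gamma(1) \in \R$, this is tangential approach to the real line. Otherwise $\gamma(1) \in \H$ and, since its $g_{t_0}$-image is real, $\gamma(1)$ must lie on $\gamma([0,t_0])$, producing a self-intersection. The main obstacle is this final transport step: the parabolic cusp survives the pullback by $g_{t_0}^{-1}$ only where that map is locally conformal. I would arrange this by choosing $t_0$ strictly greater than $t^* := \inf\{s<1 : \gamma(s) = \gamma(1)\}$ (any $t_0 \in (0,1)$ suffices when $\gamma(1) \in \R$), so that $\bar p$ is the image of a regular boundary point of $\H \setminus K_{t_0}$ rather than of the tip $\lambda(t_0)$. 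At such a regular point $g_{t_0}^{-1}$ is biholomorphic, the exponent $2-2r$ is preserved up to multiplicative constants, and the cuspidal region becomes one tangent to $\R$ or to $\gamma$ at $\gamma(t^*)$ in the original coordinates, which is exactly the conclusion of Corollary \ref{rdrivers}.
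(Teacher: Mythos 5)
Your restart-and-rescale strategy (concatenate at $t_0$, rescale time to $[0,1]$, get an effective coefficient $\bar a = a(1-t_0)^{r-1/2}\to\infty$, invoke the large-coefficient results, then pull back by $g_{t_0}^{-1}$) is in essence the paper's own argument: the paper first proves the statement directly for $a\ge \frac{3\sqrt{1-r}}{r}$ and then handles small $a$ by a one-sentence appeal to concatenation. Your write-up treats all $a$ uniformly and is also more careful about the two points the paper elides — where $g_{t_0}^{-1}$ is conformal, and the distinction between hitting $\R$ and hitting $\gamma([0,t_0])$ — so the organization is sound.

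There is, however, a genuine gap: the threshold $\bar a\ge 4$ is not enough to invoke Proposition \ref{trace}. The short form in the introduction says ``sufficiently nice,'' but the precise statement in Section \ref{Tr} requires $\lambda\in C^2[0,T)$, the bound $|\lambda(T)-\lambda(t)|\ge 4\sqrt{T-t}$, and two Loewner-curvature conditions: $9\le LC_\lambda(t)<\infty$ on $[0,T)$ together with the uniformity $\inf_{t\in[s,T)} LC_\lambda(t)\ge\delta\sqrt{T-s}\,\lambda'(s)$. For $\bar\lambda(u)=\bar a(1-u)^r$ one has $LC_{\bar\lambda}(u)=\frac{\bar a^2 r^2}{1-r}(1-u)^{2r-1}$, which is increasing, so its infimum on $[0,1)$ is $\frac{\bar a^2 r^2}{1-r}$; the condition $LC\ge 9$ therefore forces $\bar a\ge \frac{3\sqrt{1-r}}{r}$, which exceeds $4$ and blows up as $r\to 0$. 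You must push $t_0$ close enough to $1$ to clear \emph{this} larger threshold, not $4$, and you should also explicitly verify the $\delta$-condition (the paper does so with $\delta=\bar a r/(1-r)$, using that $(1-s)^{r-1/2}\ge 1$). Since $\bar a\to\infty$ as $t_0\to 1$, this is only a matter of choosing the correct cutoff — but as written the hypotheses of Proposition \ref{trace} are not checked. One further small caution: the identification of $g_{t_0}(\gamma([t_0,1]))$ with the rescaled trace presupposes that $\gamma$ extends to $t=1$; the logic should run the other way, first obtaining $\bar\gamma(1)$ from Proposition \ref{trace} and then defining $\gamma(1)$ by pulling back.
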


We have interest in applying Theorem \ref{tangentialapproach} to some driving functions that lack the regularity of $a(T-t)^r$.  See Figure \ref{Wthirdroot} for one such example.  We will briefly discuss this and other examples in the last section.

\begin{figure}
\centering
\includegraphics[scale=.8]{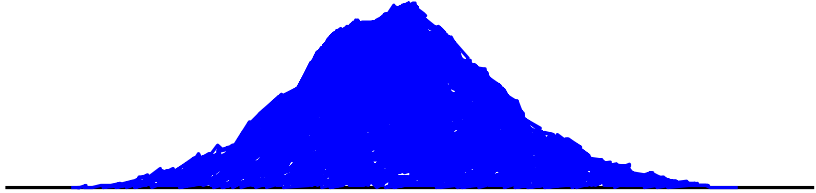} 
\caption{The Loewner hull $K_{\pi}$ driven by Weierstrass function $\displaystyle 4 \sum_{k=0}^\infty 3^{-n/3} \cos(3^n t)$.
} \label{Wthirdroot}
\end{figure}

  Due to our desire to understand the hulls of less regular driving functions, one might ask if there are weaker conditions than those of Proposition \ref{trace} that would still give the existence of a trace.  In general, the question of the existence of the trace is difficult and there has not been much progress on this front (as a notable exception to this statement, see the work in \cite{ZZ}).  We further discuss this question in the last section, and we give an example to show that monotonicity, while used in the proof of Proposition \ref{trace}, is not enough to guarantee the trace existence.

We end with a brief note about the organization of this paper.  Section \ref{LE} contains  background on the Loewner equation, and Section \ref{Pf} contains the proof of  Theorem \ref{tangentialapproach}.  In Section \ref{Tr} we explore the trace existence question by proving Proposition \ref{trace} and Corollary \ref{rdrivers} and discussing some examples.

\section{Loewner equation background}\label{LE}  

This section briefly introduces the relevant background regarding the Loewner equation.  
See \cite{lawler} for a more detailed introduction.

We work with the chordal Loewner equation in the setting of the upper halfplane $\mathbb{H}$.  In this context, the Loewner equation is the following initial value problem:
\begin{equation}\label{downLE}
\partial_t g_t(z) = \frac{2}{g_t(z) - \lambda(t)}, \;\;\;\;\; g_0(z) = z
\end{equation}
where $\lambda$ is a continuous real-valued function 
and $z \in \overline{\mathbb{H}} $.
For each initial value $z\in \overline{\mathbb{H}} \setminus \{ \l(0) \}$, a unique solution to \eqref{downLE} exists as long as the denominator remains non-zero.
  We collect the initial values that lead to a zero in the denominator into sets called hulls:
$$ K_t = \{ z \, : \, g_s(z) = \l(s) \text{ for some } s \in [0,t] \}.$$
One can show that $\mathbb{H} \setminus K_t$ is simply connected and $g_t$ is a conformal map from $\mathbb{H} \setminus K_t$ onto $\mathbb{H}$.  
Since the driving function $\l$ determines the families of hulls $K_t$, we say that $\l$ generates $K_t$ or that $K_t$ is driven by $\l$.

In many cases, there is a curve $\gamma$ (called a trace) so that $K_t$ is the complement of the unbounded component of $\mathbb{H} \setminus \gamma[0,t]$ for all $t$.  
 When the trace $\gamma$ is a simple curve in $\mathbb{H} \cup \{ \l(0) \}$, then the situation is especially nice and we have that $K_t = \gamma[0,t]$.
In this case, $g_t$ can be extended to the tip $ \g(t) $ and $g_t( \g(t)) = \l(t)$.

\begin{figure}
\centering
\includegraphics[scale=.45]{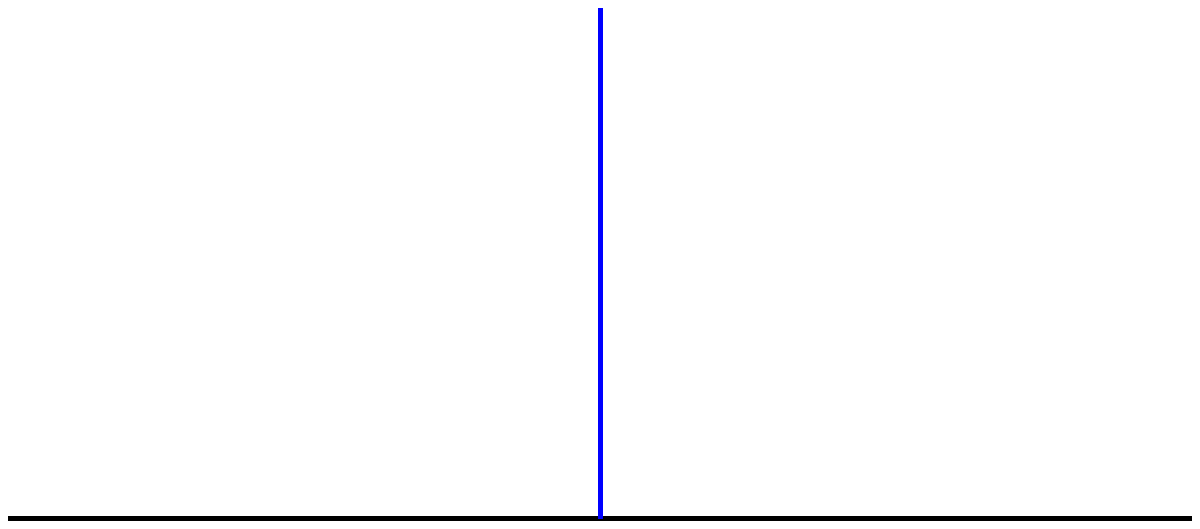} \hspace{0.4in}
\includegraphics[scale=.42]{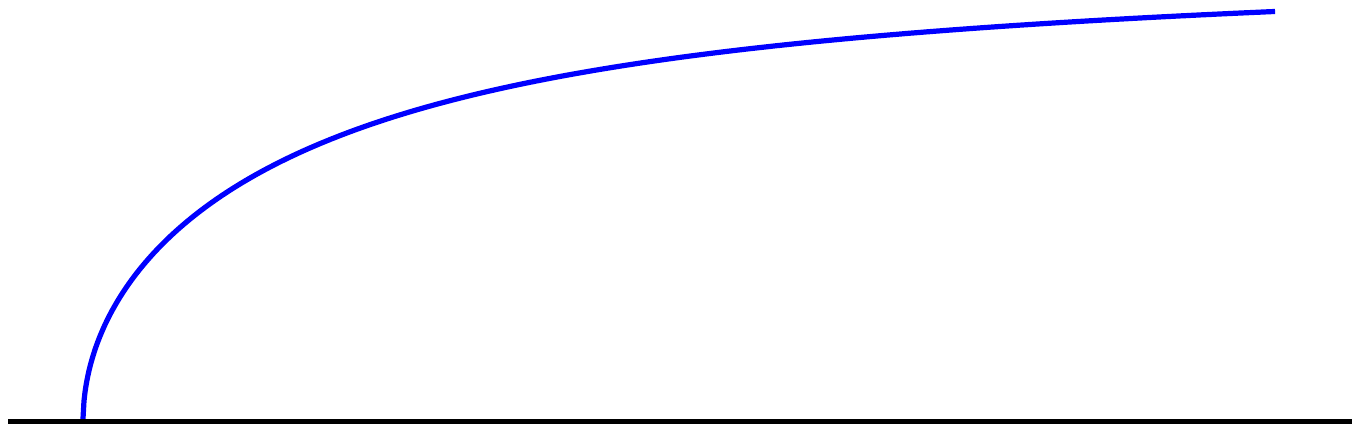}

\vspace{0.1in}

\includegraphics[scale=.42]{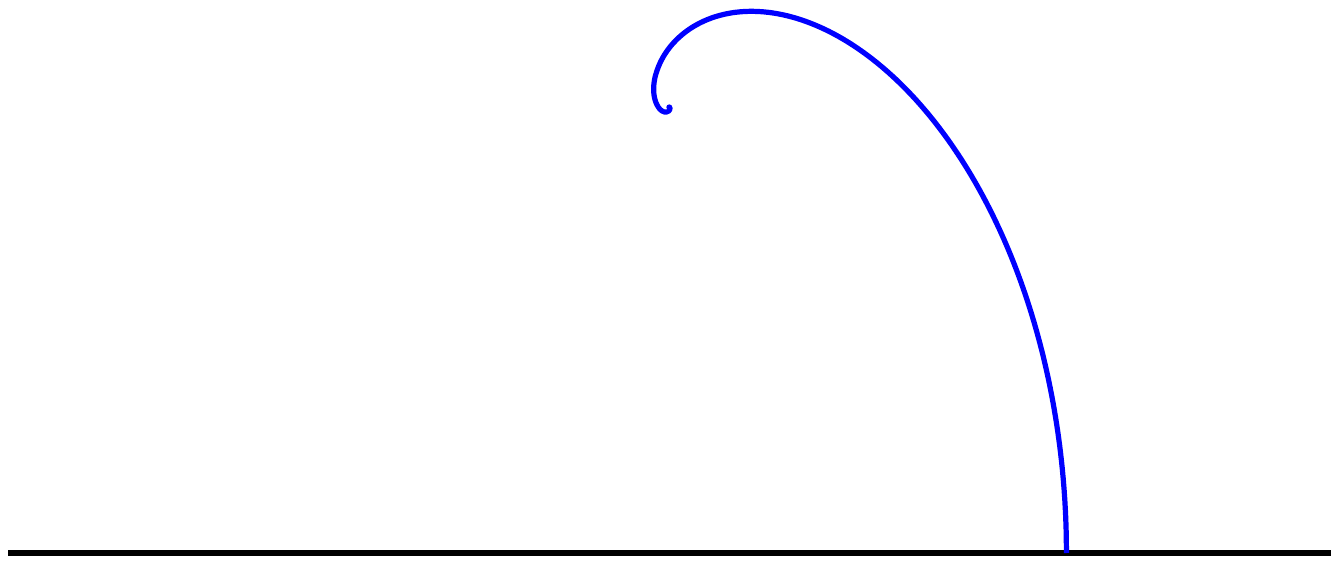} \hspace{0.4in}
\includegraphics[scale=.42]{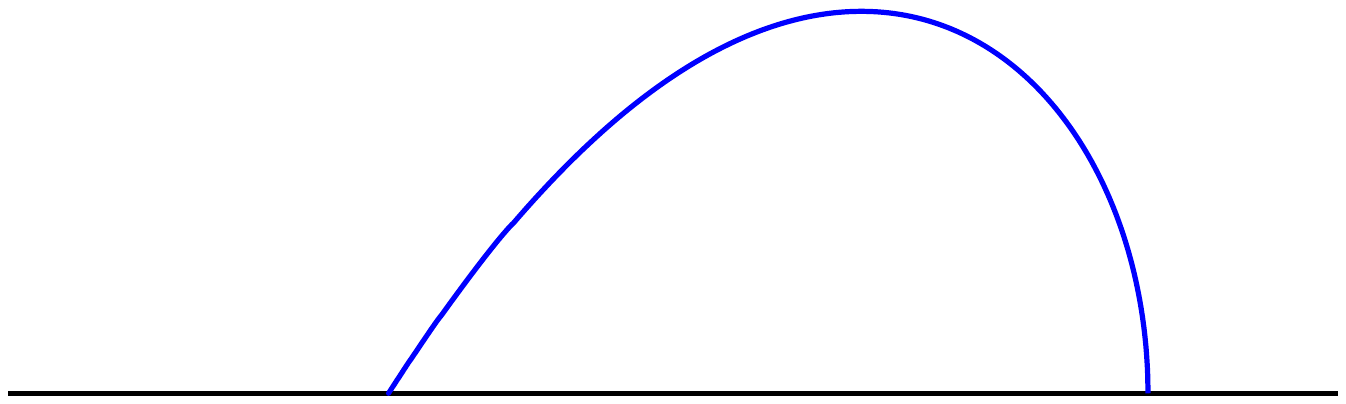}
\caption{Top left: The hull $K_t= [a, a+i2\sqrt{t}]$ driven by the constant driving function $\l \equiv a$. 
Top right: The hull  driven by a linear driving function $\l(t) = at$ with $a>0$.
Bottom left: The hull $K_1$ driven by $3\sqrt{1-t}$.
Bottom right:  The hull $K_1$ driven by $5\sqrt{1-t}$ contains the blue trace curve $\g$
  and the points under $\g$ in $\overline{\mathbb{H}}$.
} \label{Examples}
\end{figure}

See Figure \ref{Examples} for some example Loewner hulls, which were  computed in \cite{KNK}.  Note that in the bottom right example, the hull $K_1$ driven by $5\sqrt{1-t}$ has a trace $\g$ which is not a simple curve in $\mathbb{H} \cup \{ \l(0) \}$.  As a result, the hull also contains the points under the curve and a real interval.
The third and fourth hulls in Figure \ref{Examples} are from an important family of driving functions 
$k\sqrt{1-t}$.  We will use the following useful information about this family:

\begin{lemma} \label{capture}
Let $k \geq 4$. The Loewner hull $K_1$ driven by $k\sqrt{1-t}$ contains the real points in the
 interval $[(k-\sqrt{k^2-16})/2, \, k].$
Further, the Loewner hull $K_1$ driven by $\l$ with $\l(1)=0$ and $\l(t) \geq k\sqrt{1-t}$ contains the real interval $[(k-\sqrt{k^2-16})/2, \l(0)]$.
 \end{lemma}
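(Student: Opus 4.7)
My plan is to reduce the real-valued Loewner ODE for a starting point $z\in\mathbb{R}$ to a (nearly) autonomous equation by the classical substitution $w(s)=g_t(z)/\sqrt{1-t}$ with $s=-\log(1-t)$. Writing $\sigma(s)=\lambda(t)/\sqrt{1-t}$ for the rescaled driver, the chain rule yields
\[
\frac{dw}{ds}=\frac{w}{2}+\frac{2}{w-\sigma(s)}.
\]
A real $z$ lies in $K_1$ exactly when this trajectory either meets $\sigma$ at some finite $s$ (instantaneous absorption) or stays bounded as $s\to\infty$, since then $g_t(z)=w(s)\sqrt{1-t}$ either equals or tends to $0=\lambda(1)$.

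For the first statement, $\lambda(t)=k\sqrt{1-t}$ makes $\sigma\equiv k$ and the ODE autonomous, with fixed points $w_\pm=(k\pm\sqrt{k^2-16})/2$, both real for $k\geq 4$. A standard phase-line analysis shows that $w_+$ attracts every trajectory starting in $(w_-,k)$, while $w_-$ is itself an equilibrium. Hence $w(s)$ is bounded for every $z\in[w_-,k]$, so $z\in K_1$.

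For the second statement, the key observation is that $w_-$ continues to act as a lower barrier for the perturbed dynamics. At $w=w_-$, using $\sigma(s)\geq k>w_-$ and the identity $w_-w_+=4$ (so $2/(w_--k)=-2/w_+=-w_-/2$),
\[
\left.\frac{dw}{ds}\right|_{w=w_-}=\frac{w_-}{2}+\frac{2}{w_--\sigma(s)}\;\geq\;\frac{w_-}{2}+\frac{2}{w_--k}=0.
\]
Therefore any trajectory of the $\lambda$-flow starting at $z\in[w_-,\lambda(0)]$ satisfies $w^\lambda(s)\geq w_-$, which translates to $g_t^\lambda(z)\geq w_-\sqrt{1-t}$ on $[0,1)$. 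Either $z$ is absorbed at some $t^*<1$ and we are done, or $g_t^\lambda(z)-\lambda(t)$ retains the sign of its initial value $z-\lambda(0)\leq 0$ by continuity, so $g_t^\lambda(z)<\lambda(t)$ on $[0,1)$. In the latter case the squeeze $w_-\sqrt{1-t}\leq g_t^\lambda(z)<\lambda(t)\to 0$ forces $g_t^\lambda(z)\to 0=\lambda(1)$, placing $z\in K_1$.

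The main obstacle is the second statement: the perturbed ODE is not autonomous, and a naive ODE comparison between the $\lambda$- and $k\sqrt{1-t}$-flows starting from the same point does not work. The resolution is the algebraic fact that the inequality $\sigma\geq k$ is exactly what makes $w_-$ a lower barrier for the non-autonomous equation, which is the precise place where the hypothesis $\lambda(t)\geq k\sqrt{1-t}$ enters.
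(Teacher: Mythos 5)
Your proof is correct and uses the natural self-contained argument for this lemma: pass to the rescaled variable $w(s)=g_t(z)/\sqrt{1-t}$, $s=-\log(1-t)$, so that the constant-curvature driver $k\sqrt{1-t}$ becomes autonomous with fixed points $w_\pm=(k\pm\sqrt{k^2-16})/2$, and then treat $w_-$ as a lower barrier for the general $\sigma(s)\geq k$. The paper itself delegates both parts to \cite{LR} (a short computation for the first, a driver comparison for the second); the underlying mechanism there is the same rescaling and comparison idea, so your route is essentially that of the cited source rather than a genuinely different one.

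Two small remarks. First, the barrier step as written (``$w'\geq 0$ at $w=w_-$, therefore $w(s)\geq w_-$'') is not quite a complete argument, since a nonstrict sign at the barrier does not by itself forbid crossing. The clean fix is the one your inequality is already set up for: the right-hand side $F(w,\sigma)=w/2+2/(w-\sigma)$ is increasing in $\sigma$ when $w<\sigma$, so $\sigma(s)\geq k$ gives $w'\geq G(w):=w/2+2/(w-k)$ on the relevant range $w<k$; comparing with the autonomous solution $\bar w\equiv w_-$ (using that $G$ is locally Lipschitz there) yields $w(s)\geq w_-$ rigorously. Second, your closing aside that ``a naive ODE comparison between the $\lambda$- and $k\sqrt{1-t}$-flows starting from the same point does not work'' overstates the obstacle: for $z\in[w_-,k]$, one has $\partial_t g_t^\lambda(z)\geq \partial_t g_t^k(z)$ whenever both solutions lie below their drivers, so $g_t^\lambda(z)\geq g_t^k(z)\to 0$, and combined with $g_t^\lambda(z)<\lambda(t)\to 0$ this gives the same squeeze. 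Your $w$-coordinate barrier has the advantage of handling the whole interval $[w_-,\lambda(0)]$ (including $z>k$) in one stroke, but the direct comparison is not actually broken. Neither point affects the validity of the proof.
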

 
The first statment can be established by a short computation (see, for instance, Lemma 2.3 in \cite{LR}).
The second statement follows from a comparison between the driving functions $\l$ and  $k\sqrt{1-t}$ (see, for instance, the proof of Theorem 1.1 in \cite{LR}).

While Lemma \ref{capture} can be used to determine when a Loewner hull is not a simple curve, the following theorem gives a large class of hulls that are simple curves. 
We use the notation
$$||\lambda||_{1/2} = \sup_{t \neq s} \frac{\l(t) - \l(s)}{\sqrt{|t-s|}}.$$

\begin{thm}[Theorem 2 in \cite{L}]\label{cis4}
If  $||\lambda||_{1/2} < 4$, then the hulls $K_t$ driven by $\lambda$ satisfy $K_t = \g[0,t]$ for a simple curve $\g$ contained in $\mathbb{H} \cup \{\lambda(0) \}$.
\end{thm}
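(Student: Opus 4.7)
The plan is to prove Theorem~\ref{cis4} via the reverse (backward) Loewner flow, showing that the H\"older-$\tfrac12$ bound $\|\lambda\|_{1/2}<4$ forces the ``angle'' of the flow relative to the driver to stay uniformly bounded, which in turn gives H\"older continuity of the tip map and simplicity of the resulting curve. Concretely, fix $T>0$ and for $z\in\mathbb{H}$, $s\in[0,T]$ let $h_s$ solve
\begin{equation*}
\partial_s h_s(z)=\frac{-2}{h_s(z)-\lambda(T-s)},\qquad h_0(z)=z,
\end{equation*}
so that up to an additive constant $h_T$ conformally maps $\mathbb{H}$ onto $\mathbb{H}\setminus K_T$. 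The trace tip at time $T$ is then $\gamma(T)=\lim_{y\to 0^+} h_T(iy)+\lambda(0)$, provided this limit exists, so existence and continuity of the trace reduce to controlling $h_s(iy)$ as $y\to 0$, uniformly in $s$.

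The core estimate is angular. After normalizing $\lambda(T)=0$ by translation, write $h_s(iy)-\lambda(T-s)=X_s+iY_s$ and set $\Theta_s=X_s/Y_s$. A direct computation gives
\begin{equation*}
\partial_s Y_s=\frac{2Y_s}{X_s^2+Y_s^2},\qquad X_s=-\int_0^s\frac{2X_r}{X_r^2+Y_r^2}\,dr-\lambda(T-s),
\end{equation*}
so $\partial_s(Y_s^2)=4/(1+\Theta_s^2)\in(0,4]$, whence $Y_s$ grows like $\sqrt{s}$ as long as $\Theta_s$ stays bounded. The hypothesis $\|\lambda\|_{1/2}<4$ bounds $|\lambda(T-s)|$ by $c\sqrt{s}$ for some $c<4$. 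Comparing with the explicit solution for $\lambda(t)=c\sqrt{T-t}$, for which Lemma~\ref{capture} marks $c=4$ as the critical threshold at which swallowing begins, a bootstrap/integral-inequality argument in the coupled $(X_s,Y_s)$ system produces a uniform constant $M=M(c)$ with $|\Theta_s|\le M$ on $[0,T]$, independent of $y$. Equivalently, $h_s(iy)$ stays in a cone of fixed aperture centered at $\lambda(T-s)$.

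With the angular bound in hand, differentiating the backward flow in $y$ and integrating the resulting logarithmic derivative yields $|h_T'(iy)|\lesssim y^{-\alpha}$ for some $\alpha=\alpha(c)<1$, giving H\"older-$(1-\alpha)$ continuity of $y\mapsto h_T(iy)$ down to $y=0$. This produces $\gamma(T)$; running the estimate at each intermediate time and using continuity of $\lambda$ delivers joint continuity of $(t,y)\mapsto h_t(iy)$ and thus a continuous curve $t\mapsto\gamma(t)$. The same angular control, now applied to real starting points $x\ne 0$, shows that no two distinct real points can be welded to the same boundary point of $\mathbb{H}\setminus K_T$, which rules out both self-intersection and return to $\mathbb{R}\setminus\{\lambda(0)\}$. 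Hence $\gamma$ is a simple curve contained in $\mathbb{H}\cup\{\lambda(0)\}$, and $K_t=\gamma[0,t]$.

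The main obstacle is the uniform angular bound with the sharp constant $4$. The threshold is dictated by Lemma~\ref{capture}: for $c>4$ the flow genuinely swallows real intervals and $\Theta_s$ must blow up, so any proof of boundedness for $c<4$ has to detect this sharp transition. Since $\lambda$ need not be differentiable, one cannot manipulate $\dot\lambda$ pointwise; instead the estimate must be carried out at the level of integral inequalities, comparing $\Theta_s$ to the explicit trajectory for the extremal driver $c\sqrt{T-t}$ and exploiting the strict inequality $c<4$ to close the comparison on all of $[0,T]$.
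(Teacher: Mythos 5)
Your proposal tracks the architecture of Lind's 2005 proof quite faithfully: run the reverse Loewner flow from $iy$, control the relative position $X_s+iY_s=h_s(iy)-\lambda(T-s)$ via the ratio $\Theta_s=X_s/Y_s$, and translate the resulting uniform ``cone'' estimate into Hölder continuity of $y\mapsto h_T(iy)$ and hence existence and continuity of the trace. The setup equations are correct, and the identity $\partial_s(Y_s^2)=4/(1+\Theta_s^2)$ is indeed the organizing observation.

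That said, what you have written is a scaffold rather than a proof, and the load-bearing wall is missing. The claim that ``a bootstrap/integral-inequality argument in the coupled $(X_s,Y_s)$ system produces a uniform constant $M=M(c)$ with $|\Theta_s|\le M$'' \emph{is} the theorem, in the sense that obtaining this bound with the sharp threshold $4$ is essentially the entire technical content of the cited paper. Appealing to Lemma~\ref{capture} to ``explain'' the constant $4$ is circular: that lemma tells you that the hull for $c\sqrt{1-t}$, $c\ge 4$, swallows an interval, but it does not by itself give you a bound on $\Theta_s$ for an \emph{arbitrary} driver with $\|\lambda\|_{1/2}=c<4$. The estimate has to be derived directly from the integral representation of $X_s$ and the ODE for $Y_s$, and the constant $4$ emerges from the resulting differential inequality having a bounded solution iff $c<4$; none of that derivation appears here.

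Two smaller cautions. First, the passage from the angular bound to $|h_T'(iy)|\lesssim y^{-\alpha}$ with $\alpha<1$ does work, but not by naively bounding $\partial_s\log|h_s'(iy)|\le 2/Y_s^2$ and integrating — that route gives an exponent of order $(1+M^2)/2$, which need not be $<1$. The correct computation pairs $\partial_s\log|h_s'(iy)|$ against $\partial_s\log Y_s^2$, yielding the ratio $(\Theta_s^2-1)/[2(1+\Theta_s^2)]$ and hence the exponent $(M^2-1)/(M^2+1)<1$. You should make this step explicit, since otherwise a reader will try the crude bound and conclude the argument fails. Second, the simplicity step (``no two distinct real points can be welded...'') is too compressed to evaluate: a complete argument must show both that the extended map $h_T$ is continuous on $\overline{\H}$ and that the only boundary identification is the two-sided welding of a slit, i.e.\ that the trace never revisits itself or $\R\setminus\{\lambda(0)\}$; the angular bound applied to real starting points is indeed the right tool, but the deduction needs to be written out. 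Also note the normalization slip: with $\lambda(T)=0$ one has $\gamma(T)=\lim_{y\to 0^+}h_T(iy)$ directly, not shifted by $\lambda(0)$.
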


Additional driving function regularity provides additional regularity for the associated trace curves.

\begin{thm}[\cite{W}, \cite{LT}]\label{smoothtrace}
Let $\l \in C^\beta[0,T]$ for $\beta >1/2$ with $\beta + 1/2 \notin \mathbb{N}$.
Then the Loewner trace driven by $\l$ is in $C^{\beta+ \frac{1}{2}}(0,T]$.
\end{thm}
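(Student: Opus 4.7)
The plan is to transfer regularity from the driver to the trace via the reverse Loewner flow. For fixed $t \in (0,T]$, set $\xi_s = \lambda(t-s) - \lambda(t)$ on $[0,t]$ and let $h_s(z)$ solve the reverse Loewner equation
\begin{equation*}
\partial_s h_s(z) = -\frac{2}{h_s(z) - \xi_s}, \qquad h_0(z) = z.
\end{equation*}
A direct computation comparing this to the forward equation in \eqref{downLE} identifies $h_t$ with a translate of $g_t^{-1}$, yielding $\gamma(t) = \lambda(t) + \lim_{y\to 0^+} h_t(iy)$ whenever the boundary limit exists. The problem then reduces to showing that this limit exists and that $t \mapsto \lim_{y\to 0^+} h_t(iy)$ is $C^{\beta+1/2}$.

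Since $\lambda \in C^\beta$ with $\beta > 1/2$ gives $\|\lambda\|_{1/2} < \infty$, Theorem \ref{cis4} (after a local rescaling to bring the seminorm below $4$) ensures that the trace is a simple curve. A Gronwall-type argument on the reverse flow then yields the uniform distortion estimates
\begin{equation*}
\Im h_s(iy) \gtrsim \sqrt{y^2 + s} \quad\text{and}\quad |h_s'(iy)| \lesssim 1,
\end{equation*}
from which existence of the boundary limit follows. To get the $C^{\beta+1/2}$ regularity in $t$, I would couple the reverse flows associated to $t$ and $t+h$ and estimate the perturbation: the two drivers differ on an interval of length $h$ by a quantity with $C^\beta$-modulus $O(h^\beta)$, and integrating this perturbation against the Loewner kernel $2/(h_s(iy) - \xi_s)$ (which by the distortion estimate has size $O(1/\sqrt{y^2 + s})$) produces, after careful bookkeeping, an estimate $|\gamma(t+h) - \gamma(t)| \lesssim h^{\beta+1/2}$. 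Higher-order regularity then follows by bootstrapping, differentiating the reverse Loewner equation in $t$ up to $\lfloor \beta + 1/2 \rfloor$ times and applying analogous estimates to the derivatives.

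The main obstacle is controlling the iterated spatial derivatives $\partial_z^k h_s(iy)$ uniformly down to $y = 0$ under the weaker assumption that $\lambda$ is only $C^\beta$ rather than smooth. The required distortion estimates are delicate Koebe-type arguments, and each successive differentiation of the reverse flow equation introduces additional singular factors that must be carefully balanced against the Holder regularity of the driver. The integer exclusion $\beta + 1/2 \notin \mathbb{N}$ is a feature of this method rather than the conclusion: at integer Holder exponents, the boundary integration produces logarithmic corrections, placing the top derivative of $\gamma$ into the Zygmund class instead of the standard Holder class.
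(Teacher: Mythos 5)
This theorem is not proved in the present paper at all; it is imported from Wong \cite{W} (who handles $\beta \in (1/2,2]$) and Lind--Tran \cite{LT} (who extend to arbitrary $\beta$), so there is no internal proof to compare against. With that caveat, your outline does follow the strategy those references actually use: re-express the trace as the boundary value of the reverse Loewner flow via the identity $\gamma(t) = \lambda(t) + \lim_{y\to 0^+} h_t(iy)$ (your derivation of this is correct), establish uniform bounds on $\Im h_s(iy)$ from below and on $|h_s'(iy)|$ from above, couple the reverse flows started from $t$ and $t+h$ to extract the Hölder modulus of $\gamma$, and bootstrap by differentiating the flow equation for higher-order regularity. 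Your remark that the integer exclusion $\beta + 1/2 \notin \mathbb{N}$ is a Zygmund-class phenomenon of the method is also accurate.

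The problem is that as written this is a program, not a proof, and the phrases "Grönwall-type argument," "careful bookkeeping," and "delicate Koebe-type arguments" sit precisely where the actual work lives. Two concrete gaps: (1) The bound $|h_s'(iy)| \lesssim 1$ uniformly down to $y=0$ is the central lemma of \cite{W} and does not follow from a generic Grönwall estimate. One must first prove the lower bound $\Im h_s(iy) \gtrsim \sqrt{y^2 + s}$, which itself requires showing $|\Re h_s(iy) - \xi_s| \lesssim \Im h_s(iy)$; both use that $\|\lambda\|_{1/2}$ is small on short subintervals (this is where $\beta > 1/2$ enters, not merely $\|\lambda\|_{1/2} < \infty$, which your "local rescaling" remark gestures at but does not establish), and then one controls $\int_0^t \Re\bigl((h_s(iy) - \xi_s)^{-2}\bigr)\,ds$ against the singularity. (2) The bootstrap to $C^{\beta+1/2}$ for large $\beta$ in \cite{LT} is not a routine iterated differentiation of the reverse ODE: each derivative in $t$ introduces an extra factor of $(h_s - \xi_s)^{-1}$, and one must show that the accumulated singularity is compensated by the $C^{\beta - k}$ modulus of $\lambda^{(k)}$ via a telescoping/commutator structure that your sketch does not set up. So the roadmap is right, but the technical core of the theorem is asserted rather than supplied.
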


Loewner hulls satisfy  some useful properties, which we will utilize frequently.
If a driving function  $\l$ generates hulls  $K_t$, then
 the following hold:
\begin{itemize}
\item {\bf Translation:}  For $a \in \mathbb{R}$, the driving function $\l(t) + a$ generates hulls $K_t + a$.
\item {\bf Scaling:} For $k >0$, the driving function $k\l(t/k^2)$ generates hulls $k K_{t/k^2}$. 
\item {\bf Reflection:} The driving function $-\l(t)$ generates hulls $R_I(K_t)$, where $R_I$ denotes reflection about the imaginary axis.
\item {\bf Concatenation:} For $s \in (0,T)$, the driving function $\l(s+t)$ generates hulls $g_s(K_{s+t})$.
\end{itemize}

There is an alternate flow that one can use to generate Loewner hulls.  Setting $\xi(t) = \l(s-t)$ for $t \in [0,s]$, let $f_t$ satisfy the following initial value problem:
\begin{equation}\label{upLE}
\partial_t f_t(z) = \frac{-2}{f_t(z) - \xi(t)}, \;\;\;\;\; f_0(z) = z.
\end{equation}
Then $f_s = g_s^{-1}$ (where $g_t$ is the solution to \eqref{downLE} driven by $\l$), and so the hull $K_s$ driven by $\l$ is the closure of $\mathbb{H} \setminus f_s(\mathbb{H})$.
We refer to \eqref{upLE} as the upward Loewner flow, since  $\partial_t \text{Im}(f_t(z)) > 0$ for $z \in \mathbb{H}$.

For the convenience of the reader, we end this section with statements of results from other papers (possibly rewritten in our notation) that we will use.

\begin{lemma}[Lemma 3.3b in \cite{CR}] \label{CRlem}
Let $0<\epsilon<1$. If $I\subset\mathbb{R}$ is an interval of length $\sqrt{T}$ and $10I$ the concentric interval of size $10\sqrt{T}$, and if
$    \int_{0}^{T}\mathbf{1}_{\{\lambda(t)\in 10I\}}\mathrm{d}t\leq\epsilon T,$
then
$    K_T\cap I\times[4\sqrt{\epsilon T},\infty)=\emptyset.$ 
\end{lemma}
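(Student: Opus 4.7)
The plan is to fix a candidate point $z_0 = x_0 + iy_0$ with $x_0 \in I$ and $y_0 \geq 4\sqrt{\epsilon T}$, and to rule out $z_0 \in K_T$ by tracking its orbit $g_t(z_0) = X_t + iY_t$ under the downward Loewner flow \eqref{downLE}. Since $Y_t$ is monotonically nonincreasing along the flow, $z_0 \in K_T$ is equivalent to $Y_T = 0$, so it suffices to show $Y_T > 0$. The driving identity, obtained by differentiating $Y_t^2$, is
\[ Y_T^2 \;=\; y_0^2 \;-\; \int_0^T \frac{4 Y_t^2}{(X_t - \lambda(t))^2 + Y_t^2}\, dt. \]

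I would split this integral according to whether $\lambda(t) \in 10I$ (a ``bad'' set of measure at most $\epsilon T$ by hypothesis) or $\lambda(t) \notin 10I$. On the bad set the integrand is trivially bounded by $4$, contributing at most $4\epsilon T$. On the good set, assuming $X_t$ stays within the concentric interval $\wt I$ of length $5\sqrt{T}$ around $I$, the separation $|X_t - \lambda(t)|$ is at least a definite multiple of $\sqrt{T}$, so the integrand is of order $Y_t^2/T \leq y_0^2/T$ and its integral over $[0,T]$ is a strictly sub-unit fraction of $y_0^2$. Since $y_0^2 \geq 16\epsilon T$, the two contributions leave $Y_T^2 > 0$, as desired.

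The real task, and the main obstacle, is justifying that $X_t \in \wt I$ throughout $[0,T]$. I would set this up as a bootstrap/continuity argument: let $\tau$ be the first time $X_t$ reaches $\partial\wt I$, and suppose $\tau \leq T$ for contradiction. On $[0,\tau]$ the estimate above still applies and yields $Y_t \gtrsim \sqrt{\epsilon T}$. I would then combine the standard Loewner bound $|\dot X_t| \leq 2/|X_t - \lambda(t)|$ on the good set, where it is of order $1/\sqrt{T}$, with the AM--GM bound $|\dot X_t| \leq 1/Y_t$ on the bad set, where it is of order $1/\sqrt{\epsilon T}$. Integrating against the respective time measures gives $|X_\tau - x_0| < 2\sqrt{T}$, contradicting the minimum displacement $2\sqrt{T}$ required to reach $\partial\wt I$ from a starting point in $I$. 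The subtlety is that the $Y$ lower bound and the $X$ containment feed into each other, so the two bootstraps must be coupled; the dilation factor $10$ in the hypothesis and the threshold $4\sqrt{\epsilon T}$ in the conclusion are tuned precisely to make this coupled argument close.
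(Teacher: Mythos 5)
This lemma is cited from Chen--Rohde \cite{CR} and not proved in the present paper, so there is no in-paper proof to compare against; the proposal should be judged against the original argument of \cite{CR}, which it reproduces in spirit. Your argument is correct and is the standard one: track $g_t(z_0)=X_t+iY_t$ under the downward flow, use $\partial_t(Y_t^2)=-4Y_t^2/[(X_t-\lambda(t))^2+Y_t^2]$, split time by whether $\lambda(t)\in 10I$, and close the loop with a first-exit-time argument for $X_t$. The constants do check out with $\widetilde I$ the $5$-dilate: on the good set $|X_t-\lambda(t)|\geq 2.5\sqrt T$ (since $10I$ extends $4.5\sqrt T$ beyond $I$ and $\widetilde I$ extends $2\sqrt T$), giving $Y_t^2\geq 0.36\,y_0^2-4\epsilon T\geq 1.76\,\epsilon T$ on $[0,\tau]$; the $X$-drift is then at most $0.8\sqrt T$ on the good set (from $|\dot X_t|\leq 2/|X_t-\lambda(t)|$) plus at most $\epsilon T/\sqrt{1.76\,\epsilon T}<0.76\sqrt T$ on the bad set (from the AM--GM bound $|\dot X_t|\leq 1/Y_t$), totalling under $2\sqrt T$, which is the distance needed to reach $\partial\widetilde I$ from $I$. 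One small stylistic note: the ``coupled bootstrap'' is really a single first-exit-time argument for $X_t\in\widetilde I$, since the $Y$-lower-bound then holds automatically on $[0,\tau]$; there is no genuine circularity once $\tau$ is defined this way.
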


\begin{lemma}[Lemma 4.2 in \cite{ZZ}]\label{ZZlem}
Let $I= \{x \in \mathbb{R} \cap K_1 \setminus \cup_{t<1} K_t \}$.  If $I$ is an interval and there exists $x_0 \in I^\circ$ and $c >0$ so that
$$ \frac{|\l(t) - g_t(x_0)|}{\sqrt{1-t}} > c$$
for all $t \in [0,1)$,
then there exists an open set $B$ in $\mathbb{C}$ containing $I^\circ$ so that
$B \cap \overline{\mathbb{H}} \subset K_1 \setminus \cup_{t<1} K_t$.
\end{lemma}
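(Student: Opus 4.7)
The plan is to build $B$ as the union $\bigcup_{x\in I^\circ} D_x$ of small half-disks $D_x\subset\overline{\mathbb{H}}$ centered at points $x\in I^\circ$, each shown to lie in $K_1\setminus\bigcup_{t<1}K_t$. After normalizing by translation so that $\lambda(1)=0$, I first observe that $g_t(x_0)-\lambda(t)$ has constant sign on $[0,1)$ (any zero would force $x_0\in K_t$ for some $t<1$, contradicting $x_0\in I^\circ$), so by reflection I may assume $g_t(x_0)-\lambda(t)>c\sqrt{1-t}>0$ throughout.

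The core step produces a half-disk $D_{x_0}=\{z\in\overline{\mathbb{H}}:|z-x_0|<r\}$ contained in $K_1\setminus\bigcup_{t<1}K_t$ for $r$ sufficiently small. For $z\in D_{x_0}$, I compare $g_t(z)$ to $g_t(x_0)$ via the distortion ODE
\[
\partial_t\!\bigl(g_t(z)-g_t(x_0)\bigr)
=-\frac{2\bigl(g_t(z)-g_t(x_0)\bigr)}{(g_t(z)-\lambda(t))(g_t(x_0)-\lambda(t))}.
\]
A careful analysis of the real part of the logarithmic derivative, leveraging the $\sqrt{1-t}$ lower bound from the hypothesis, yields a quantitative decay $|g_t(z)-g_t(x_0)|\to 0$ as $t\to 1$, which (for $r$ small) keeps $|g_t(z)-\lambda(t)|\geq (c/2)\sqrt{1-t}$ uniformly on $[0,1)$, ruling out $z\in K_t$ for $t<1$, and forces $g_t(z)\to\lambda(1)=0$, giving $z\in K_1$. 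An alternative route to the $z\in K_1$ conclusion uses the concatenation property together with Lemma \ref{capture}: applied to the rescaled residual driver $\lambda(t+(1-t)u)/\sqrt{1-t}$ on an intermediate tail interval, centered at the reference point $g_t(x_0)/\sqrt{1-t}$, the hypothesis forces a square-root-type comparison of coefficient $\geq 4$, whereby a real interval of length $\asymp\sqrt{1-t}$ about $g_t(x_0)$ is swallowed; pulling back through $g_t^{-1}$ fills out $D_{x_0}$ as $t\to 1$.

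To extend the construction from $x_0$ to all of $I^\circ$: for $x\in I^\circ$ with $x\geq x_0$, monotonicity of $g_t$ in its real initial variable gives $g_t(x)\geq g_t(x_0)$ hence $g_t(x)-\lambda(t)\geq c\sqrt{1-t}$, so the core argument applies verbatim at $x$ to produce $D_x$. For $x\in I^\circ$ with $x<x_0$, this direct monotonicity fails, and I handle that case by a connectedness argument propagating the already-established open neighborhood of $[x_0,\sup I)$ in $\overline{\mathbb{H}}$ leftward across $I^\circ$, using continuity of $g_t$ in the initial condition together with the fact $x\notin\bigcup_{t<1}K_t$, with the half-disk radii allowed to shrink as $x\to\inf I$. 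Setting $B=\bigcup_{x\in I^\circ}D_x$ yields the required open set.

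The main obstacle is the uniform-in-$t$ distortion estimate in the core step: standard Gronwall bounds on the displayed ODE degenerate as $t\to 1$, because the denominator involves $|g_t(x_0)-\lambda(t)|^2\sim (1-t)$, making $\int^1 dt/(1-t)$ divergent. The argument must exploit the fine structure of the hypothesis—together with the fact that $g_t(z)-g_t(x_0)$ lives in $\mathbb{H}$ with a controlled real part—to convert the logarithmic divergence into a usable polynomial decay of the form $|g_t(z)-g_t(x_0)|\lesssim r(1-t)^{\alpha}$ for some $\alpha>0$. The secondary obstacle is the ``left-of-$x_0$'' extension, where monotonicity of $g_t$ is unhelpful and a more delicate continuity argument is required to cover the portion of $I^\circ$ below $x_0$ by half-disks.
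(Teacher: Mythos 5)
This lemma is not proved in the paper; it is stated verbatim as Lemma~4.2 of the cited reference \cite{ZZ}, in the background section labeled ``statements of results from other papers.'' So there is no in-paper proof to compare against; I can only evaluate your proposal on its own terms.

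Your proposal is a sketch with two gaps that you yourself flag, and at least the first one is substantive rather than merely technical. The core step asks for a decay estimate $|g_t(z)-g_t(x_0)|\lesssim (1-t)^{\alpha}$, which you propose to extract from the ODE for $w_t:=g_t(z)-g_t(x_0)$. Writing $B_t:=g_t(x_0)-\lambda(t)>0$ and $A_t:=g_t(z)-\lambda(t)=B_t+w_t$, one has
\[
\partial_t\log|w_t| \;=\; -\,\mathrm{Re}\!\left[\frac{2}{A_tB_t}\right] \;=\; -\,\frac{2\bigl(B_t+\mathrm{Re}\,w_t\bigr)}{B_t\,|B_t+w_t|^2}.
\]
While $|w_t|\le B_t/2$, this gives $\partial_t\log|w_t|\le -\,\tfrac{4}{9}B_t^{-2}$, so the total logarithmic decrease is controlled by $\int_0^1 B_t^{-2}\,dt$. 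The hypothesis supplies only a \emph{lower} bound $B_t>c\sqrt{1-t}$, which makes this integral smaller, not larger; and it is consistent with, say, $B_t\asymp(1-t)^{\alpha}$ for any $\alpha\in(0,1/2)$, in which case $\int_0^1 B_t^{-2}\,dt<\infty$ and Gronwall yields only $|w_1|\ge e^{-C}|w_0|>0$, i.e., no decay at all. So the claim that ``leveraging the $\sqrt{1-t}$ lower bound\ldots yields a quantitative decay'' does not follow; the lower bound pushes in the wrong direction, and the needed estimate requires an input you have not identified (for instance, an argument exploiting that $I$ is a nondegenerate \emph{interval} and not just a single point, or a two-sided control of $B_t$, or a comparison hull argument). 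The ``alternative route'' via Lemma~\ref{capture} is also not supported as stated: the constant $c$ in the hypothesis need not be $\ge 4$, and recentering the rescaled driver at $g_t(x_0)/\sqrt{1-t}$ does not produce a driver comparison of the form $\hat\lambda(u)\ge 4\sqrt{1-u}$ because the reference point itself evolves under the flow.

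The second gap (covering $I^\circ$ to the left of $x_0$) is a real hole as well: monotonicity of $g_t$ gives the wanted inequality only on one side of $x_0$, and a ``connectedness/continuity'' argument is gestured at but not given. Until the half-disk $D_{x_0}$ is actually produced, however, this secondary step is moot. The overall plan (half-disks via a distortion ODE, monotonicity on one side, some propagation on the other) is a reasonable line of attack, but as written the proposal does not yet constitute a proof of the lemma.
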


The last result uses the concept of Loewner curvature introduced in \cite{LRcurv}.  
For a driving function $\l \in C^2[0,T)$ the Loewner curvature  can be computed by
\begin{equation}\label{LCdef}
LC_{\l}(t) =  
    \begin{cases}
       0 &\quad\text{if } \l'(t) = 0\\
         \frac{\l'(t)^3}{\l''(t)} &\quad\text{otherwise} \\ 
     \end{cases}.
\end{equation}
Note that driving functions $\alpha + c\sqrt{\tau-t}$ have constant Loewner curvature $c^2/2$.
The Loewner curvature comparison principle (which is stated below in part) allows for comparison with the hulls generated by constant curvature driving functions.

\begin{thm}[Theorem 15b in \cite{LRcurv}]\label{LCthm}
Let $\gamma$ be the trace driven by $\l \in C^2[0,T)$. 
If $ 9 \leq c^2/2 \leq LC_{\l}(t) < \infty$, then 
$\gamma[0,T)$ does not intersect the interior of the hull $K^*_\tau$ driven by
 $\mu(t) =  \alpha + c\sqrt{\tau-t}$, where $\alpha$ and $\tau$ are chosen so that $\l(0) = \mu(0)$ and $\l'(0) = \mu'(0)$. 
\end{thm}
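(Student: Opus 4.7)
The plan is to compare the trace of $\lambda$ to a continuously varying family of osculating square-root drivers and exploit the monotonicity forced by the Loewner curvature hypothesis. By the reflection symmetry of the Loewner equation, and because $LC_\lambda > 0$ compels $\lambda'$ and $\lambda''$ to share a sign, I may assume $\lambda'(t) < 0$ on $[0,T)$. For each $t \in [0,T)$ define the osculating driver
\[
\mu_t(s) = \alpha(t) + c\sqrt{\tau(t) - s}, \qquad s \in [t,\tau(t)],
\]
by requiring $\mu_t(t) = \lambda(t)$ and $\mu_t'(t) = \lambda'(t)$. The second equation fixes $\tau(t) - t = c^2/(4\lambda'(t)^2)$ and then the first determines $\alpha(t)$; at $t=0$ this reproduces the $\mu$ and $\tau$ of the theorem statement, so the hull of $\mu_0$ is precisely $K^{*}_\tau$.

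The central computation is to differentiate the identity $\tau(t) - t = c^2/(4\lambda'(t)^2)$ in $t$, yielding
\[
\tau'(t) - 1 \;=\; -\frac{c^2\,\lambda''(t)}{2\lambda'(t)^3} \;=\; -\frac{c^2/2}{LC_\lambda(t)}.
\]
Under the hypothesis $LC_\lambda(t) \geq c^2/2$ this gives $\tau'(t) \geq 0$ while $(\tau(t)-t)$ is nonincreasing; comparing the formulas $\lambda''(t) = \lambda'(t)^3/LC_\lambda(t)$ and $\mu_t''(t) = 2\lambda'(t)^3/c^2$ also yields $\lambda''(t) \geq \mu_t''(t)$ for every $t$.

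Next, use the concatenation property to define, for each $t$, the hull
\[
K^{(t)} \;=\; K_t \cup g_t^{-1}\bigl(\mathcal{K}^{\mu_t}\bigr)
\]
generated by the spliced driver that follows $\lambda$ on $[0,t]$ and $\mu_t$ on $[t,\tau(t)]$, where $\mathcal{K}^{\mu_t}$ is the hull in $\mathbb{H}$ driven by $\mu_t$. Since $c^2/2 \geq 9 > 8$, the scaled version of Lemma \ref{capture} forces $\mathcal{K}^{\mu_t}$ and hence $K^{(t)}$ to close back onto the real axis, so $K^{(t)}$ is a closed bounded region in $\overline{\mathbb{H}}$ with $K^{(0)} = K^*_\tau$ and with the tip $\gamma(t)$ on $\partial K^{(t)}$. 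If I can establish the monotonicity $K^{(s)} \subset K^{(t)}$ for $0 \leq t \leq s < T$, specialization to $t=0$ gives $\gamma(s) \in K^*_\tau$; the strict case $LC_\lambda > c^2/2$ makes $\tau'(t) > 0$ and pushes $\partial K^{(s)}$ strictly off of $\partial K^*_\tau$, keeping $\gamma(s)$ off of $\mathrm{int}(K^*_\tau)$, while the equality case $LC_\lambda \equiv c^2/2$ forces $\lambda = \mu$ and $\gamma \subset \partial K^*_\tau$ by uniqueness of solutions to \eqref{downLE}.

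The main obstacle is the monotonicity $K^{(s)} \subset K^{(t)}$. An infinitesimal formulation suffices: when $\lambda$ is followed for $dt$ past $t$ and then the comparison driver is reset from $\mu_t$ to $\mu_{t+dt}$, the resulting hull should lie inside the previous one. Transferring by $g_t$ to the half-plane, this reduces to a pointwise comparison of the two half-plane hulls driven by $\mu_t$ and by the $dt$-shifted spliced driver obtained from $\lambda$ followed by $\mu_{t+dt}$. The ingredients $\tau'(t) \geq 0$ and $\lambda''(t) \geq \mu_t''(t)$ are precisely what is needed to show that the latter driver lies below the former on their common domain, and I expect this pointwise driver inequality to convert into the desired set containment through the upward Loewner flow \eqref{upLE}, where monotonicity of the imaginary part of $f_t(z)$ in the driver is transparent.
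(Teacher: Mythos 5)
This statement is cited as Theorem~15b of \cite{LRcurv}; the present paper does not prove it, so the only thing to assess is whether your argument is itself sound. It is not: the central step is missing and the claimed direction of inclusion leads to the wrong conclusion.

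The preliminary computations are fine. From $\tau(t)-t = c^2/(4\lambda'(t)^2)$ one indeed gets $\tau'(t)-1 = -(c^2/2)/LC_\lambda(t)$, hence $\tau'(t)\geq 0$ under the curvature hypothesis, and the osculation relations force $\lambda''(t) \geq \mu_t''(t)$ (both negative when $\lambda'<0$). But the proof then pivots on the claim that $K^{(s)} \subset K^{(t)}$ for $t\leq s$, and you offer no argument for this beyond the remark that the spliced driver lies on one side of $\mu_t$ pointwise and that you ``expect this pointwise driver inequality to convert into the desired set containment through the upward Loewner flow.'' This is the entire content of the theorem and it is not automatic. A pointwise ordering of two driving functions that both terminate at the same value does \emph{not} in general give nesting of the resulting hulls: the hull is a nonlinear, nonlocal functional of the driver, and easy examples (two constant drivers at different heights, or a constant driver versus a sloped one) produce hulls that are not comparable as sets. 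Monotonicity of $\operatorname{Im} f_t(z)$ along a single trajectory of the upward flow says nothing about where different trajectories land under two different flows, which is what hull containment requires. This is precisely why \cite{LRcurv} proves Theorem~15 by an ODE comparison argument tracking a distinguished point under the forward flow, rather than by comparing hulls directly.

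Separately, even granting the monotonicity, the inclusion is stated in the wrong direction for the conclusion you want. If $K^{(s)}\subset K^{(0)}=K^*_\tau$ and, in the strict case, $\partial K^{(s)}$ is pushed strictly \emph{inside} $K^*_\tau$, then $\gamma(s)\in\partial K^{(s)}$ lands in the \emph{interior} of $K^*_\tau$ --- exactly what the theorem forbids. The geometric picture one wants is that larger Loewner curvature $LC_\lambda \geq c^2/2$ makes $\lambda$ bend \emph{less} than the constant-curvature comparison $\mu$, so the spliced hulls should grow, $K^{(t)}\subset K^{(s)}$, keeping $\gamma(s)\in\partial K^{(s)}$ on or outside $\partial K^*_\tau$. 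So the sign in your key inequality chain is reversed, and the argument as written would prove the opposite of the statement if the missing step could be filled.
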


\section{Proof of the tangential result}\label{Pf}

In this section, we prove Theorem \ref{tangentialapproach}.  Our first step is to consider the mapped down hull $\hat{K}_{s,1} := g_s(K_1 \setminus K_s)$ and 
show that this hull must be low near 0 (see Lemma \ref{step1}).
Then, in the second step, we
watch points from $\hat{K}_{s,1}$ under the upward Loewner flow to gain bounds on 
$K_1$ near $p$.

\begin{lemma}\label{fromCRlemB}
Suppose $\l$  is defined on $[0,1]$ and satisfies that 
$\lambda(1)=0$ and $\lambda(t) \geq k \sqrt{1-t}$ for $t \in [0,1]$.
Let $K_t$ be driven by $\l$.
Then $\displaystyle K_1 \cap \left( (-\infty,2] \times [26/k, \infty) \right) = \emptyset$.
\end{lemma}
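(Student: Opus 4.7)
The plan is to invoke Lemma \ref{CRlem} at $T=1$ with a family of length-one intervals that covers $(-\infty,2]$, and to use the pinning $\lambda(t)\geq k\sqrt{1-t}$ to control how long $\lambda$ can reside in each enlarged interval $10I$. The intuition is that staying above $k\sqrt{1-t}$ forces $\lambda$ out of any interval whose upper endpoint $M$ is substantially below $k$, except possibly on the tail $t\in[1-(M/k)^2,1]$; hence $\lambda$ lives inside such an interval for time at most $(M/k)^2$, which is exactly the quantity Lemma \ref{CRlem} wants to see.

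To carry this out I would parameterize by $x_0\in(-\infty,3/2]$, take $I_{x_0}=[x_0-\tfrac12,x_0+\tfrac12]$, and observe that $\bigcup_{x_0\leq 3/2} I_{x_0}=(-\infty,2]$. Since $\lambda\geq 0$, the event $\{\lambda(t)\in 10I_{x_0}\}=\{\lambda(t)\in[x_0-5,x_0+5]\}$ reduces to $\{\lambda(t)\leq x_0+5\}$; combined with the hypothesis this confines $t$ to $[1-(x_0+5)^2/k^2,\,1]$, so that the integral in Lemma \ref{CRlem} is at most $\epsilon_{x_0}:=(x_0+5)^2/k^2$. Feeding this into the lemma returns $K_1\cap I_{x_0}\times[4(x_0+5)/k,\infty)=\emptyset$. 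The extremal case $x_0=3/2$ both places the right endpoint of $I_{x_0}$ at $2$ and pins the height bound at $4\cdot(13/2)/k=26/k$, matching the constant in the conclusion; unioning over $x_0\leq 3/2$ then yields $K_1\cap(-\infty,2]\times[26/k,\infty)=\emptyset$.

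The only technical point I would need to verify is the requirement $\epsilon_{x_0}<1$ in Lemma \ref{CRlem}, which here amounts to $x_0+5<k$. For $x_0\leq 3/2$ this is automatic as soon as $k>13/2$, and this is precisely the regime in which the bound $26/k$ is tight enough to improve on the trivial height estimates for Loewner hulls driven on $[0,1]$. (In the complementary regime one can either appeal to a crude height bound or take $\epsilon$ approaching $1$ from below in Lemma \ref{CRlem}, neither of which requires any new idea.) The main obstacle is therefore not any deep analytic input but rather choosing the covering intervals so that the worst-case $x_0$ is exactly on the boundary of the target region: the $5$ from the $10I$ rescaling, the $4$ from the $4\sqrt{\epsilon T}$ in Lemma \ref{CRlem}, and the right endpoint $x_0+\tfrac12=2$ of the rightmost $I_{x_0}$ must compound into the constant $4\cdot(2-\tfrac12+5)=26$ before the union argument can be clean.
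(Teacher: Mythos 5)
Your proposal is correct and takes essentially the same route as the paper: cover $(-\infty,2]$ by unit intervals, use $\lambda(t)\geq k\sqrt{1-t}$ to bound the occupation time of $10I$ by $(6.5/k)^2$, feed that into Lemma~\ref{CRlem}, and dispatch the small-$k$ regime via the trivial half-plane-capacity height bound (the paper cuts at $k<13$, where $26/k\geq 2$). The one caveat is your parenthetical alternative of taking $\epsilon\to 1^{-}$ in the small-$k$ case: the hypothesis $\int_0^1 \mathbf{1}_{\{\lambda(t)\in 10I\}}\,dt\leq\epsilon$ need not hold for any $\epsilon<1$ when $k$ is small (the driver can sit inside $10I$ for the whole time), so the crude height bound is the only viable fallback, which is exactly what the paper uses.
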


\begin{proof}
When $k < 13$, the result is trivially true because any Loewner hull at time $t=1$ has its height bounded by 2, and so we assume $k \geq 13$.  Let $I \subset (-\infty, 2]$ be an interval of length 1.  
The amount of time that $\l$ spends  in $10I$, the concentric interval of length 10 which is contained in $(-\infty, 6.5]$, is at most  $(6.5/k)^2$.  
Therefore, applying Lemma \ref{CRlem} with $\epsilon = (6.5/k)^2$, 
we conclude that $K_1$ does not intersect $I \times [26/k, \infty)$.
\end{proof}

\begin{lemma}\label{step1}
Suppose that $\l$  is defined on $[0,1]$ and satisfies that $\l(1)=0$ and $\l(t) \geq a (1-t)^r$  where $a\geq 4$ and $r \in (0, 1/2)$.  
Then for $s<1$, $\hat{K}_{s,1}= g_s(K_1 \setminus K_s)$ satisfies that 
\begin{equation}\label{low}
\displaystyle \hat{K}_{s,1} \cap \left( (-\infty,2\sqrt{1-s}] \times [26 a^{-1}(1-s)^{1-r}, \infty) \right) = \emptyset.
\end{equation}
Further, $\inf \{x \in \hat{K}_{s,1} \cap \mathbb{R} \} \leq \frac{8}{a}(1-s)^{1-r}$.
\end{lemma}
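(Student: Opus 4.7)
The plan is to reduce both claims to Lemma \ref{fromCRlemB} and Lemma \ref{capture} by mapping the problem about $\hat{K}_{s,1}$ to statements about a rescaled driving function defined on $[0,1]$. By the Concatenation property, $\hat{K}_{s,1}$ is generated by $\mu(t) := \lambda(s+t)$ for $t \in [0,1-s]$. Applying the Scaling property with factor $1/\sqrt{1-s}$ to pass to unit time, set
$$\tilde{\mu}(t) = \frac{\lambda(s + (1-s)t)}{\sqrt{1-s}}, \qquad t \in [0,1],$$
so that $\tilde{\mu}$ generates the hull $\tilde{K} := \frac{1}{\sqrt{1-s}}\hat{K}_{s,1}$.

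The essential computation is to verify that $\tilde{\mu}$ satisfies a clean square-root lower bound. Directly from the hypothesis on $\lambda$,
$$\tilde{\mu}(t) \geq \frac{a(1-s)^r(1-t)^r}{\sqrt{1-s}} = a(1-s)^{r-1/2}(1-t)^r \geq k\sqrt{1-t},$$
where $k := a(1-s)^{r-1/2}$, and the last inequality uses $r<1/2$ together with $1-t \leq 1$, so that $(1-t)^r \geq (1-t)^{1/2}$. Because $1-s \leq 1$ and $r-1/2 < 0$, we have $k \geq a \geq 4$, and clearly $\tilde{\mu}(1)=0$. Thus $\tilde{\mu}$ falls within the hypotheses of both Lemma \ref{fromCRlemB} and the second statement of Lemma \ref{capture}.

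For the height bound \eqref{low}, Lemma \ref{fromCRlemB} applied to $\tilde{\mu}$ yields $\tilde{K} \cap ((-\infty,2]\times[26/k,\infty)) = \emptyset$. Scaling by $\sqrt{1-s}$ to return to $\hat{K}_{s,1}$, and noting that
$$\frac{26\sqrt{1-s}}{k} = \frac{26\sqrt{1-s}}{a(1-s)^{r-1/2}} = \frac{26}{a}(1-s)^{1-r},$$
gives the claimed emptiness for $\hat{K}_{s,1}$. For the second claim, Lemma \ref{capture} applied to $\tilde{\mu}$ guarantees that $\tilde{K}$ contains $[(k-\sqrt{k^2-16})/2,\tilde{\mu}(0)]$. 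The elementary estimate $k-\sqrt{k^2-16} = 16/(k+\sqrt{k^2-16}) \leq 16/k$ then bounds the left endpoint by $8/k$, so $\inf\{x \in \tilde{K}\cap\mathbb{R}\} \leq 8/k$. Rescaling by $\sqrt{1-s}$ yields the bound $8\sqrt{1-s}/k = (8/a)(1-s)^{1-r}$ on $\hat{K}_{s,1}$. The only real difficulty is bookkeeping the scaling exponents; once the reduction to $\tilde{\mu}$ is set up, the result follows from the cited lemmas.
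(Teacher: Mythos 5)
Your proof is correct and follows the same route as the paper: pass to the rescaled driving function $\hat\lambda(t) = \lambda(s+t(1-s))/\sqrt{1-s}$ via concatenation and scaling, verify $\hat\lambda(t)\geq k\sqrt{1-t}$ with $k = a(1-s)^{r-1/2}$, and then invoke Lemma \ref{fromCRlemB} for the height bound and Lemma \ref{capture} for the left-endpoint bound, rescaling back by $\sqrt{1-s}$. You simply spell out a few intermediate inequalities (e.g.\ $(1-t)^r \geq \sqrt{1-t}$ and $k \geq a \geq 4$) that the paper leaves implicit.
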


\begin{proof}
The rescaled hull $\frac{1}{\sqrt{1-s}} \hat{K}_{s,1}$ is generated by the driving function
$$\hat{\l}(t) = \frac{1}{\sqrt{1-s}}  \l(s+t(1-s)), \;\;\;   t \in [0,1]$$ 
which satisfies that $\hat{\l}(1)=0$ and $\hat{\l}(t) \geq a (1-s)^{r-1/2} \sqrt{1-t}$.
To obtain \eqref{low}, we apply Lemma \ref{fromCRlemB} with $k = a (1-s)^{r-1/2}$ 
and then
rescale by $\sqrt{1-s}$.

To establish the second statement, 
we apply Lemma \ref{capture} to driving function $\hat{\l}$.  
Thus for $k = a (1-s)^{r-1/2}$, the hull driven by $\hat{\l}$ contains the point
$$\frac{k-\sqrt{k^2-16}}{2} = \frac{8}{k+\sqrt{k^2-16}} \leq \frac{8}{k}.$$
In other words, there is a real point $\hat{p}$ in the hull  
$\frac{1}{\sqrt{1-s}} \hat{K}_{s,1}$ with $\hat{p} \leq \frac{8}{a}(1-s)^{1/2-r}$.  Scaling then gives the desired result.
\end{proof}

Next we need to analyze the upward Loewner flow.  
For $s$ fixed  and $t \in [0,s]$, we set $\xi_t= \l(s-t)$ and let $f_t = x_t + i y_t$ satisfy \eqref{upLE},
which can be decomposed into the pair of equations
\begin{equation} \label{upLExy}
  \partial_t x_t = 2 \frac{\xi_t - x_t}{(\xi_t-x_t)^2 + y_t^2} \;\;\;\;  \text{and} \;\;\;\;
      \partial_t y_t = 2 \frac{ y_t}{(\xi_t-x_t)^2 + y_t^2}.
\end{equation}

\begin{lemma}\label{UpwardBounds}
Let $s \in (0,1)$ be fixed and let $\xi_t$ satisfy $\xi_t \geq a(1-s + t)^r$ for $t\in[0,s]$ and $r \in (0,1/2)$.
Let $x_t$ and $y_t$ be the solutions to \eqref{upLExy} with initial values $x_0$ and $y_0$, respectively, and let $p_t=f_t(p_0^s)$ be the solution  to \eqref{upLE} with initial value $p^s_0 = \inf \{ x \in \hat{K}_{s,1}\cap \mathbb{R} \}$.
There exists $M=M(r) \geq 4$, so that when $a \geq M$, the following hold:
\begin{enumerate}
\item[(i)] If $x_0 \leq 2\sqrt{1-s}$, then $x_t \leq 2\sqrt{1-s+t}$ for all $t \in [0,s]$.
\item[(ii)] If $x_0 \leq 2\sqrt{1-s}$, then $y_t \leq 2 y_0$ for all $t \in [0,s]$.
\item[(iii)] If  $x_0 \in [p^s_0+\sqrt{1-s}, \, 2\sqrt{1-s}]$ and $y_0<  26 a^{-1} (1-s)^{1-r}$, then
   $x_t-p_t \geq x_0-p^s_0$ for all $t \in [0,s]$. 
\end{enumerate}
\end{lemma}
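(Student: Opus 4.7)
The plan is to prove (i), (ii), (iii) in order, since each part uses the previous, via a continuity bootstrap on the target inequality. The key structural feature is that $1/2 - r > 0$ and $1 - s + t \leq 1$, so $a(1-s+t)^r$ dominates $2\sqrt{1-s+t}$ uniformly in $t \in [0,s]$ once $a \geq 4$; this uniform domination is what lets all the lower-order errors be absorbed.

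For (i), suppose toward contradiction that $t^* < s$ is the first time $x_{t^*} = 2\sqrt{1-s+t^*}$. Dropping $y_t^2$ from the denominator of \eqref{upLExy} and using $\xi_{t^*} - x_{t^*} \geq a(1-s+t^*)^r - 2\sqrt{1-s+t^*}$ gives
$$\partial_t x_t\big|_{t^*} \leq \frac{2}{a(1-s+t^*)^r - 2\sqrt{1-s+t^*}} \leq \frac{1}{\sqrt{1-s+t^*}} = \partial_t\bigl(2\sqrt{1-s+t}\bigr)\big|_{t^*},$$
where the last inequality reduces to $a \geq 4(1-s+t^*)^{1/2-r}$ and so holds for $a \geq 4$. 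This contradicts the maximality of $t^*$.

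For (ii), part (i) yields $\xi_t - x_t \geq (a/2)(1-s+t)^r$ for $a \geq 4$, hence
$$\partial_t \log y_t = \frac{2}{(\xi_t - x_t)^2 + y_t^2} \leq \frac{8}{a^2(1-s+t)^{2r}}.$$
Integrating and using $1-2r > 0$, one obtains $\log(y_t/y_0) \leq 8/(a^2(1-2r))$, which is at most $\log 2$ once $a^2 \geq 8/((1-2r)\log 2)$. This is where the $r$-dependence of $M(r)$ enters.

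Part (iii) is the main obstacle. A direct calculation shows
$$\partial_t(x_t - p_t) = \frac{2(\xi_t - x_t)}{(\xi_t - x_t)^2 + y_t^2} - \frac{2}{\xi_t - p_t},$$
which is non-negative if and only if $(\xi_t - x_t)(x_t - p_t) \geq y_t^2$. I would bootstrap on this: let $t^* := \sup\{t \in [0,s] : x_\tau - p_\tau \geq x_0 - p_0^s \text{ for all } \tau \in [0,t]\}$ and aim for $t^* = s$. On $[0, t^*]$, the gap $x_t - p_t \geq x_0 - p_0^s \geq \sqrt{1-s}$ together with the bounds from (i) and (ii), namely $\xi_t - x_t \geq (a/2)(1-s)^r$ and $y_t < 52\,a^{-1}(1-s)^{1-r}$, yields
$$(\xi_t - x_t)(x_t - p_t) \geq \tfrac{a}{2}(1-s)^{r + 1/2}, \qquad y_t^2 < 2704\, a^{-2}(1-s)^{2-2r}.$$
The former strictly exceeds the latter whenever $a^3 > 5408\,(1-s)^{3/2 - 3r}$, which is automatic for $a$ large (for instance $a \geq 18$) since $3/2 - 3r > 0$ and $1-s \leq 1$. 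Therefore $\partial_t(x_t - p_t) > 0$ strictly on $[0, t^*]$, and if $t^* < s$ then by continuity $x_{t^*} - p_{t^*}$ strictly exceeds $x_0 - p_0^s$, contradicting the definition of $t^*$. Taking $M(r) = \max\bigl\{4,\, 18,\, \sqrt{8/((1-2r)\log 2)}\,\bigr\}$ makes all three parts hold.
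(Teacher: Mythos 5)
Your proof is correct and follows essentially the same route as the paper: an ODE comparison at first-crossing times for (i), a derivative bound integrated over $[0,s]$ for (ii) (you phrase it via $\partial_t \log y_t$ where the paper compares directly with an additive function, but the bound is the same), and the same numerator analysis for $\partial_t(x_t - p_t)$ in (iii). The numerical constants differ slightly (you obtain $a \geq 18$ for (iii) where the paper obtains $a \geq 15$), but since the lemma only asserts existence of $M(r)$, this is immaterial.
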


\begin{proof}
Assume $x_0 \leq 2\sqrt{1-s}$.  Let $\tau \in [0,s]$ be a time when  $ x_\tau = 2\sqrt{1-s+\tau}$.  
Then since $\xi_\tau \geq a(1-s+\tau)^r \geq a\sqrt{1-s+\tau}$,
$$ \left. \left( \partial_t \, x_t \right) \right\vert_{t=\tau} \leq  \frac{2}{\xi_\tau-x_\tau } 
    \leq \frac{2}{(a-2)\sqrt{1-s+\tau}} \leq \frac{1}{\sqrt{1-s+\tau}} 
    = \left.\left( \partial_t \, 2\sqrt{1-s+t}  \right)\right\vert_{t=\tau}.$$
This implies that $x_t$ can never surpass $2\sqrt{1-t+s}$ and hence (i) holds.

For (ii), we continue to assume that $x_0 \leq 2\sqrt{1-s}$. Then by (i), we have that 
$$\xi_t-x_t \geq a(1-s+t)^r - 2\sqrt{1-s+t} \geq (a-2)(1-s+t)^r.$$
At times $\tau$ when $y_\tau \leq 2 y_0$, we have that
$$ \left.\left( \partial_t \, y_t  \right)\right\vert_{t=\tau} \leq \frac{4y_0}{(a-2)^2(1-s+\tau)^{2r}}
   \leq  \left. \partial_t \left(y_0+\frac{4y_0}{(a-2)^2(1-2r)}(1-s+t)^{1-2r} \right)\right\vert_{t=\tau}$$
We choose $a$ large enough so that $4(a-2)^{-2}(1-2r)^{-1} \leq 1$.
Thus we can conclude that $y_t$ remains bounded by $y_0+y_0(1-s+t)^{1-2r} \leq 2y_0$.

Lastly, assume that  $x_0 \in [p^s_0+\sqrt{1-s}, \, 2\sqrt{1-s}]$ and $y_0<  26 a^{-1} (1-s)^{1-r}$, 
and assume that $a$ is large enough for (ii) to hold.  
Then 
\begin{equation}\label{xminusp}
\partial_t\left( x_t - p_t \right) = 2 \frac{\xi_t - x_t}{(\xi_t-x_t)^2 + y_t^2} - \frac{2}{ \xi_t -p_t} 
      = 2 \frac{(\xi_t - x_t)(x_t - p_t) - y_t^2}{(\xi_t-p_t)[(\xi_t-x_t)^2+y_t^2]}.
\end{equation}
Let $\tau \in [0,s]$ be a time when  $ x_\tau -p_\tau = x_0 - p_0^s$.      
 Our goal is to show that the numerator $(\xi_\tau - x_\tau)(x_\tau - p_\tau) - y_\tau^2 >0$, meaning that $x_t - p_t$ is increasing at time $\tau$ (and hence $x_t - p_t \geq x_0 - p_0^s$ for all $t \in [0,s])$.
Now by applying (i) and (ii) and the fact that $x_0 - p_0^s \geq \sqrt{1-s}$, we obtain
\begin{align*}
 (\xi_\tau - x_\tau)(x_\tau - p_\tau) - y_\tau^2  
          &\geq (a-2)\sqrt{1-s+\tau}\sqrt{1-s} - \left(\frac{52}{a}\right)^2(1-s)^{2-2r}\\
          &\geq (1-s)\left[ a-2 - \left(\frac{52}{a}\right)^2(1-s)^{1-2r} \right]
\end{align*}
We can guarantee that this is positive by taking $a\geq 15$.
\end{proof}

\begin{proof}[Proof of Theorem \ref{tangentialapproach}]

We will first prove the case when $a \geq M$ (i.e. $a$ is large enough for Lemma \ref{UpwardBounds} to hold).
Set $K^* := K_1 \cap \{z \, : \, \text{Re}(z) \leq 2 \} $.  
We will show that $K^*$ is contained in the region $ \{x+iy \, : \, 0 \leq x, \, 0 \leq y \leq 52a^{-1}(x-p)^{2-2r}\}$.

Let $s \in [0,1)$ and let $z \in \hat{K}_{s,1}$ with Re$(z) \in  [p^{s}_0+\sqrt{1-s}, \, 2\sqrt{1-s}]\}$.
Then  Lemma \ref{step1} and Lemma \ref{UpwardBounds} imply that
$$\text{Im}\left( f_{s}(z) \right) \leq 2 \,\text{Im}(z) \leq 52 a^{-1}(1-s)^{1-r}$$
and
$$\text{Re}\left( f_{s}(z) \right) - p \geq \text{Re}(z) - p^{s}_0,$$
where we used that $f_s(p^s_0) = p$.
Thus 
$$ f_{s}(z) \in [p+\sqrt{1-s}, \, \infty] \times [0,\, 52 a^{-1} (1-s)^{1-r}].$$

It remains to show that 
\begin{equation}\label{K}
K^* \subset \bigcup_{s \in [0, 1)} f_s\left( \hat{K}_{s,1} \cap \{z \, : \, \text{Re}(z) \in [p_0^s+\sqrt{1-s}, \, 2\sqrt{1-s}] \} \right),
\end{equation}
which will follow once we show the boundary $\partial K_1$ in $\{x+iy \, : \, x \leq 2, y >0\}$ is contained in the right hand side of \eqref{K}.
Let $z \in \partial K_1 \cap \{x+iy \, : \, x \leq 2, y >0\}$.  Then $z$ is added to the hull $K_1$ at some time $T(z) < 1$. 
(This follows from Proposition 4.27 in \cite{lawler}, which says that there is at most one $t$-accessible point.)
Therefore as $s \to T(z)$, $f_s^{-1}(z) \to \lambda(T(z)) \geq a(1-T(z))^r$. 
 Since $p_0^{T(z)} \leq  \frac{8}{a}(1-T(z))^{1-r}$,
 there exists some $ T(z)<s < 1$ so that $\text{Re}(f_s^{-1}(z))   \in [p_0^s+\sqrt{1-s}, \,2\sqrt{1-s}] $.

Now suppose that $4 \leq a < M$.  Let $s \in (0, 1)$ satisfy that $a (1-s)^{r-1/2} = M$.
Since the driving function $\hat{\l}$ of  $\frac{1}{\sqrt{1-s}} \hat{K}_{s,1}$ satisfies $\hat{\l}(t) \geq M (1-t)^r$, the previous case implies that 
$$\left( \frac{1}{\sqrt{1-s}} \hat{K}_{s,1} \right) \cap \{x+iy \, : \, x \leq 2 \} \subset  \{x+iy \, : \, 0 \leq x, \, 0 \leq y \leq 52M^{-1}(x-\hat{p})^{2-2r}\}$$
for $\hat{p} = \inf \{ x \in \frac{1}{\sqrt{1-s}} \hat{K}_{s,1} \cap \mathbb{R} \}$.
The desired result follows since $f_{s}(\sqrt{1-s}\, z)$ is conformal in a neighborhood of $\hat{p}$ and takes $\frac{1}{\sqrt{1-s}} \hat{K}_{s,1}$ to $K_1\setminus K_s$ with  $\hat{p}$ mapping to $p$.

It remains to show that the constant $C$ in the statement of Theorem 2 only depends on $a$ and $r$.  
This will follow from showing that $|f_s'(p_0^s)|$ is bounded below, 
since when $y=Cx^b$, then $(\hat{x},\hat{y}) = (kx, ky)$ satisfy $\hat{y} = C k^{1-b}\hat{x}^b$.  
Note that by Schwarz reflection, $f_s$ can be extended to be conformal in $\mathbb{C} \setminus I$ for an interval $I \in \mathbb{R}$.  By the distortion theorem
$$|f'_s(p_0^s)| \geq \frac{\text{dist}(p, K_s)}{\text{dist}(p_0^s, I)}.$$
Set $d = \frac{1}{10} \text{min} \{ \xi_t \, : \, t \in [0,s]\}$.
Then $\text{dist}(p_0^s, I) \leq 10d$.   We claim that $\text{dist}(p, K_s) \geq d$.

To prove the claim we will show that $\text{dist}(p_t, \hat{K}_{s-t,s}) \geq d $  for all $t \in [0,s]$.
This holds at time 0, since $\hat{K}_{s,s} = \{\xi_0\}$.  
For $t$ close to 0, $\hat{K}_{s-t,s}$ is near $\xi_t$ and $\xi_t - p_t  \geq \xi_t/2 \geq 5d$.
Let $\tau$ be the first time $t$ when $\text{dist}(p_t, \hat{K}_{s-t,s}) = 2d $.
Let $z_\tau  \in \hat{K}_{s-\tau,s}$ with $| p_\tau - z_\tau | = 2d$, and for $t \in [\tau, s]$ let $z_t = x_t+iy_t$ satisfy \eqref{upLE}. 
 If $y_\tau > d$, then $| p_t - z_t | > d$ for all $t \in [\tau, s]$ since $z_t$ is moving upwards.  
 Suppose $y_\tau \leq d$, which means that $|x_\tau - p_\tau | \geq d$.  We will consider the case that $x_\tau - p_\tau  \geq d$, as the other case is similar.
Then by \eqref{xminusp} 
$$ \partial_t\left( x_t - p_t \right) \arrowvert_{t=\tau} 
= 2 \frac{(x_\tau - p_\tau)(\xi_\tau-p_\tau)-4d^2}{(\xi_\tau-p_\tau)[(\xi_\tau-x_\tau)^2+y_\tau^2]}.
$$
Note that the numerator satisfies
$$ (x_\tau - p_\tau)(\xi_\tau-p_\tau)-4d^2 \geq d \, \frac{\xi_\tau}{2} -4d^2 >0.$$
Therefore the distance between $z_t$ and $p_t$ is increasing at time $\tau$, which shows that $\text{dist}(p, K_s) \geq d$.

\end{proof}

\section{Discussion of trace existence and examples}\label{Tr}

In this section we  discuss  the existence of a trace curve, especially in the context of the driving functions that we are considering in this paper, i.e. those with end behavior bounded by $a(T-t)^r$ for $r \in (0,1/2)$.  We consider the following question:

\begin{question} \label{traceQ}
Let $\lambda : [0,T] \to \mathbb{R}$ be a continuous function such that the corresponding Loewner hull $K_t$ has a trace curve for $t \in [0,T)$.
What additional conditions are needed to guarantee that $K_t$ has a trace curve for $t \in [0,T]$?
\end{question}

Questions such as this about the existence of the trace have often proved to be difficult to answer.  
When $| \l(T) - \l(t) | / \sqrt{T-t} $ is bounded as $t \to T$, then Theorem 1.2 in \cite{ZZ} gives one possible answer to this question.  Since this result does not apply when the driving function is faster than $a(T-t)^r$ for $r \in (0,1/2)$ as $t\to T$, we are interested in other answers to Question \ref{traceQ}, such as the following result.

\begin{thmtrace}
Let $\l:[0,T] \to \mathbb{R}$ satisfy $\l \in C^2[0,T)$ and 
$ |\lambda(T)-\lambda(t)| \geq 4\sqrt{T-t}$ for all $t \in [0,T)$.
Assume the Loewner curvature satisfies $9 \leq LC_\lambda(t) < \infty$ for all $t \in [0,T)$,
and there exists $\delta>0$ so that for $s \in (0,1)$, 
$$\inf_{t \in [s,T)} LC_{\l}(t) \geq \delta \sqrt{T-s} \, \l'(s).$$
Then the trace $\g$ driven by $\l$ satisfies that $\displaystyle \gamma(T) = \lim_{t \to T} \gamma(t)$
exists and is real.
\end{thmtrace}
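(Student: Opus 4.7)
The plan is to establish $\diam(K_T\setminus K_s)\to 0$ as $s\to T$, which forces the trace to be Cauchy and so produces $\gamma(T)=\lim_{t\to T}\gamma(t)$. After translating so that $\lambda(T)=0$ and (by reflection if necessary) $\lambda(t)\ge 4\sqrt{T-t}$ on $[0,T)$, Lemma \ref{capture} gives $[2,\lambda(0)]\subset K_T$; the shrinking sets $K_T\setminus K_s$ accumulate on $\partial K_T$ in $\overline{\mathbb{H}}$, so the Cauchy limit must lie on this real interval and in particular is real.

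A useful preliminary is that $|\lambda'(s)|\to\infty$ as $s\to T$. The curvature hypothesis $9\le LC_\lambda<\infty$ rules out $\lambda'=0$ on $[0,T)$ and forces $\mathrm{sgn}(\lambda')=\mathrm{sgn}(\lambda'')$; since $\lambda$ must decrease to $0$ from above, $\lambda'<0$ and therefore $\lambda''<0$, so $\lambda'$ is monotone decreasing. The lower bound $\lambda(t)\ge 4\sqrt{T-t}$ is then incompatible with $|\lambda'|$ bounded (otherwise one would have $\lambda(t)=O(T-t)$), giving $\lambda'(s)\to-\infty$. In the $g_s$-picture, the hull $\tilde K_s:=g_s(K_T\setminus K_s)$, driven by the shifted function $\hat\lambda(t)=\lambda(s+t)$, has half-plane capacity $2(T-s)$, so the standard capacity-to-radius bound gives $\diam(\tilde K_s)=O(\sqrt{T-s})\to 0$.

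The substantive step is to transfer this smallness back to $\mathbb{H}$ through $f_s=g_s^{-1}$. Since $g_s(K_s)=\{\lambda(s)\}$, the map $f_s$ extends conformally to a complex neighborhood of $\lambda(s)$ by Schwarz reflection, and therefore $\diam(K_T\setminus K_s)\le C|f_s'(\lambda(s))|\diam(\tilde K_s)$. To control $|f_s'(\lambda(s))|$ I would invoke Theorem \ref{LCthm} via the concatenation property: at each $s$, the constant-curvature driver $\mu_s(t)=\alpha_s+c_s\sqrt{\tau_s-t}$ matched to $\hat\lambda$ at $t=0$ with $c_s^2/2=\inf_{t\in[s,T)}LC_\lambda(t)\ge 9$ produces a comparison hull $K_s^*$ whose interior the trace of $\hat\lambda$ avoids. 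The growth hypothesis $\inf_{t\in[s,T)}LC_\lambda(t)\ge\delta\sqrt{T-s}|\lambda'(s)|$ then guarantees $\diam(K_s^*)\gtrsim\sqrt{T-s}$, so the comparison hulls provide geometric trapping at the same scale as $\tilde K_s$ itself. Applying this exclusion along a sequence $s_n\uparrow T$ and composing the distortion estimates for $f_{s_{n+1}}\circ f_{s_n}^{-1}$ between consecutive times should yield a telescoping product bound for $|f_s'(\lambda(s))|$ that, combined with $\diam(\tilde K_s)=O(\sqrt{T-s})$, gives $\diam(K_T\setminus K_s)\to 0$.

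The main obstacle is exactly this last step: the exclusion provided by Theorem \ref{LCthm} is geometric and does not directly produce a derivative estimate for $f_s$. Converting it into the required bound on $|f_s'(\lambda(s))|$ will likely require an analysis of the upward Loewner flow near $\lambda(s)$ in the spirit of Lemma \ref{UpwardBounds}, using the comparison hulls to anchor the deformation of $f_s$ as $s\to T$.
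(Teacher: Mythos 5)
Your strategy is built on showing $\diam(K_T\setminus K_s)\to 0$ as $s\to T$, but this quantity does \emph{not} tend to zero, so the approach is dead on arrival. By Lemma \ref{capture} (with the normalization $\lambda(T)=0$, $\lambda(t)\ge 4\sqrt{T-t}$), the terminal hull $K_T$ contains the real interval $[2,\lambda(0)]$, which has length at least $2$. On the other hand, for $s<T$ the hull $K_s=\gamma[0,s]$ is a simple curve meeting $\mathbb{R}$ only at $\lambda(0)$. Therefore $K_T\setminus K_s$ contains $[2,\lambda(0))$ for every $s<T$, and $\diam(K_T\setminus K_s)\ge 2$ uniformly: the entire region underneath the curve and the attached real segment are swept into the hull all at once at time $T$. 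This also shows your ``capacity-to-radius'' step has the inequality backwards (half-plane capacity is controlled by diameter, not the reverse), and more importantly that no bound on $|f_s'|$ could rescue the claim, since the image set genuinely stays macroscopic.

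The Schwarz-reflection step is also wrong as stated. The map $f_s=g_s^{-1}$ extends by reflection only across the parts of $\mathbb{R}$ that $g_s$ maps to $\mathbb{R}$, not across the interval $I_s=g_s(\partial K_s)$ that contains $\lambda(s)$. The point $\lambda(s)$ is the $g_s$-image of the tip $\gamma(s)$, where the angle doubles, so the one-sided derivative $f_s'(\lambda(s))$ is $0$, and near the ends of $I_s$ the derivative of $f_s$ blows up. There is no neighborhood of $\lambda(s)$ on which $f_s$ is conformal with controlled distortion, which is precisely how a tiny hull $g_s(K_T\setminus K_s)$ can map to a set of fixed size.

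The paper's proof abandons the diameter idea entirely and instead characterizes the cluster set of $\gamma(t)$ as $t\to T$. It first shows the only possible real limit point is $p=\inf\{x\in K_T\cap\mathbb{R}\}$: assuming a limit point $q\in(p,\lambda(0))$, Lemma \ref{ZZlem} forces $\liminf_{t\to T}(\lambda(t)-g_t(q))/\sqrt{T-t}=0$, and after mapping down and rescaling, the Loewner-curvature comparison (Theorem \ref{LCthm}) together with your growth hypothesis (which guarantees the comparison hull has real footprint of length at least $\delta/2$) traps the image of $q$ strictly inside the comparison hull, which $\hat\gamma$ cannot enter --- a contradiction. It then rules out limit points in $\mathbb{H}$ by noting that a connected cluster set containing both $p\in\mathbb{R}$ and points of $\mathbb{H}$ would force $\gamma$ to oscillate between its left and right prime-end sides, contradicting the monotonicity of $\lambda$ that follows from $9\le LC_\lambda<\infty$. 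If you want to salvage a quantitative version of your idea, the object to control is not $\diam(K_T\setminus K_s)$ but rather the distance from $\gamma(s)$ to the fixed real interval, and that is essentially what the cluster-set argument accomplishes without any derivative estimates for $f_s$.
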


\begin{proof}
From \eqref{LCdef} and the bound on Loewner curvature, $\l'(t) \neq 0$ for all $t \in [0,T)$.   
Hence, $\l$ must be monotone and $\l'$ does not change sign. 
We make the following simplifying normalizations: 
by the scaling property, we may assume that $T=1$,  
 by the translation property, we may assume that $\l(1)=0$, 
and by the reflection property, we may assume that $\l'(t) < 0$.

If $t \in [0,1)$, then the  Loewner hull $K_t$ driven by $\l$ satisfies $K_t=\gamma[0,t]$ for a simple curve $\gamma$ in $\mathbb{H} \cup \{ \l(0) \}$, by Theorem \ref{cis4}.  
Lemma \ref{capture} 
guarantees that $K_1 \cap \mathbb{R}$  is a non-degenerate interval with right endpoint $\l(0)$.
Set $p=\inf \{ x \in K_1 \cap \mathbb{R} \}$ be the left endpoint.
We wish to show that 
$$ \lim_{t \nearrow 1} \gamma(t) = p.$$

First we will rule out the case that there are additional limit points of $\gamma$ in $\mathbb{R}$.
By way of contradiction, we assume that there is $q \in (p, \l(0)) \subset \mathbb{R}$ so that 
$\gamma(t_n) \to q $ for a sequence $t_n$ increasing to 1.
Let $q_t = g_t(q)$ be the solution to  \eqref{downLE}.
By Lemma \ref{ZZlem}, 
this implies that 
$$\liminf_{t \to 1} \frac{\l(t) - q_t}{\sqrt{1-t}}= 0.$$
Choose $s$  so that 
$$\frac{\l(s) - q_s}{\sqrt{1-s}} < \frac{\delta}{2}.$$
Consider the mapped and rescaled hulls 
$$\hat{K}_t = \frac{1}{\sqrt{1-s}} g_s(K_{s+t(1-s)} \setminus K_s),$$ 
which are generated by the driving function
$$\hat{\l}(t) = \frac{1}{\sqrt{1-s}}  \l(s+t(1-s)), \;\;\;   t \in [0,1].$$ 
Note that for $t <1$, $\hat{K}_t = \hat{\gamma}[0, t]$ for a simple curve $\hat{\gamma}$ and $\hat{q} =\frac{q_s}{\sqrt{1-s}} \in \hat{K}_1$ is a limit point of $\hat{\gamma}(t)$ as $t \to 1$ and satisfies that $\hat\l(0) - \hat q < \delta/2$.

\begin{figure}
\centering
\begin{tikzpicture}
  \draw (0, 0) node[inner sep=0]
{\includegraphics[scale=.6]{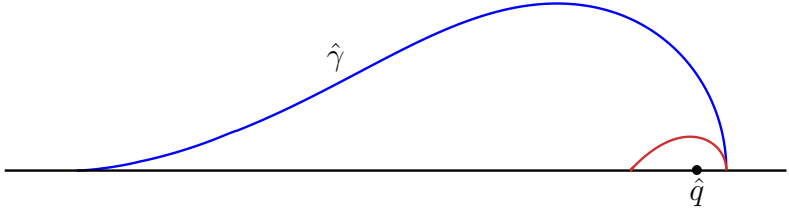}};
  \draw (3.8, -1.4) node {$\hat q$};
    \draw (-1, 0.4) node {$\hat \gamma$};
  \begin{scope}[every node/.style={circle,draw=black,fill=black!100!,font=\sffamily\Large\bfseries}]
   \node (v1) [scale=0.3] at (3.8,-1.1) {};
    \end{scope}
\end{tikzpicture}
\caption{The Loewner curvature comparison principle implies that $\hat{\gamma}$ (shown in blue) does not intersect the region below the smaller curve (shown in red) that contains $\hat{q}$.
} \label{LCfig}
\end{figure}

To obtain a contradiction, we will utilize the Loewner curvature comparison principle, 
which will show that there is a relatively open set $B$ in $\overline{\mathbb{H}}$
containing $\hat{q}$ so that
$B \cap \hat{\g}[0,T) = \emptyset$,  
as illustrated in Figure \ref{LCfig}.
We  compare $\hat{\l}$ to
 $\mu(t) = \alpha + c\sqrt{\tau-t}$,
 with the constants chosen as follows:
  $c$ is chosen so that $c^2/2= \inf_{t \in [s,T)} LC_{\l}(t)$,
   $\tau$  is chosen so that $\hat{\l}'(0)= \mu'(0)$, 
 and  $\alpha$  is chosen so that $\hat{\l}(0) = \mu(0)$.
Since  $$ LC_{\hat\l}(t) = LC_{\l}(s+t(1-s)) \geq \frac{c^2}{2} \geq 9, $$
 the Loewner curvature comparison principle (Theorem \ref{LCthm})  
implies that $\hat\gamma$ stays above and never intersects the interior of the hull driven by $\mu$.
It remains to show $\hat q$ is contained in this hull.
Note by Lemma \ref{capture}
the hull driven by $c\sqrt{1-t}$ contains a real interval of length at least $c/2$. 
  Hence by scaling, the hull driven by $\mu(t) = \alpha+ c\sqrt{\tau}  \sqrt{1-t/\tau}$ contains an interval of length at least
  $$\frac{c\sqrt{\tau}}{2} = \frac{c^2}{4\sqrt{1-s} \, \l'(s)} \geq \frac{\delta}{2}$$
  with right endpoint $\hat\l(0)$. 
 This  implies that
 $\hat{q}$ is contained in the interior of the hull generated by $\mu$ (using the relative topology of $\overline{\mathbb{H}}$), and hence $\hat{q}$ cannot be a limit point of $\hat\gamma$.

To finish the proof, it remains to show that there cannot be a limit point of $\gamma$ as $t \to 1$ in $\mathbb{H}$.  
If there were, the set limit points of $\gamma$ as $t \to 1$ would be a continuum 
containing $p \in \mathbb{R}$ and $\gamma$ would need to oscillate, such as in Figure \ref{oscillation}.  
Since the set of limit points extends into $\mathbb{H}$, 
 $\gamma$ must alternate between following its left side, i.e. the prime ends $g_t^{-1}(x)$ for $x < \l(t)$, and its right side.
This would require $\l$ to be non-monotone.

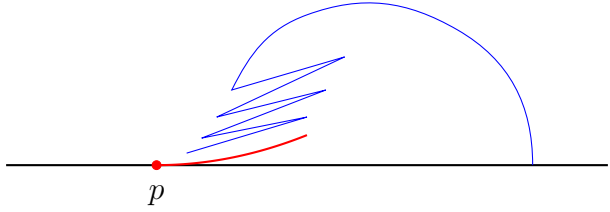
\begin{figure}
\centering
\begin{tikzpicture}
\draw[thick] (-2,0) -- (6,0);
\draw[blue] (5,0) to [out=90,in=-30] (4,1.8) to [out=150,in=20] (2,2) to [out=200,in=63.4] (1,1) to (2.5,1.44) to (2.5,1.44) to  (0.8,0.64) to (2.25,1) to (0.6,0.36) to (2.0,0.64) to  (0.4,0.16);
\draw [red, thick,  domain=0:2, samples=40] 
 plot ({\x}, {\x^2/10} );
  \begin{scope}[every node/.style={circle,draw=red,fill=red!100!,font=\sffamily\Large\bfseries}]
   \node (v1) [scale=0.3] at (0,0) {};
    \end{scope}
    \draw (0, -0.4) node {$p$};
 \end{tikzpicture}
\caption{The beginning of $\gamma$ (shown in blue) when the set of limit points of $\gamma$ as $t \to 1$ is a continuum  (shown in red) containing $p \in \mathbb{R}$ and points in $\mathbb{H}$.  
} \label{oscillation}
\end{figure}

\end{proof}

We now apply Proposition \ref{trace} and Theorem \ref{tangentialapproach} to analyze the Loewner hulls driven by $a(1-t)^r.$

\begin{proof}[Proof of Corollary \ref{rdrivers}] 
By scaling, we may assume that $T=1$, and by reflection we may assume that $a>0$.  Thus we take
 $\lambda(t) = a(1-t)^r$ for $a>0$ and $r \in (0, 1/2)$, and we let $K_t$ be the Loewner hulls driven by $\l$. 
By Theorems \ref{cis4} and \ref{smoothtrace},  there is a simple curve $\gamma  \in C^{2.5}(0,T)$ so that $K_t=\gamma[0,t]$ for $t \in [0,T)$.

We first assume that $a\geq \frac{3\sqrt{1-r}}{r}$.
Since $\frac{3\sqrt{1-r}}{r} \geq 3\sqrt{2} >4$,
this assumption  guarantees that $|\l(1) - \l(t)| \geq 4\sqrt{1-t}$.
Computing Loewner curvature gives
$$ LC_{\l}(t) = \frac{\l'(t)^3}{\l''(t)} =\frac{a^2r^2}{1-r} \cdot \frac{1}{(1-t)^{1-2r}} \geq 9 $$
and for $\delta = ar/(1-r)$,
$$\frac{\inf_{t \in [s,1)} LC_{\l}(t)}{\sqrt{1-s}\, \l'(s)} = \frac{ar}{1-r} \frac{1}{(1-s)^{1/2-r}} \geq \delta.$$ 
Thus Proposition \ref{trace} implies that $\displaystyle \gamma(T) = \lim_{t \to T} \gamma(t)$ exists and is real.
Theorem \ref{tangentialapproach} implies that $\g(t)$ approaches $\mathbb{R}$ tangentially as $t \to 1$.

The result for $a < \frac{3\sqrt{1-r}}{r} $ follows from the large $a$ case and the concatenation property.

\end{proof}

Since the monotonicity of the driving function played a role in the proof of Proposition \ref{trace}, it is natural to ask whether this property is sufficient to answer Question \ref{traceQ}.  The following example shows that monotonicty alone is not enough to guarantee the existence of a trace on the full time interval.
We also note that this example could be modified so that the driving function is in $C^2[0,1)$, showing that the problem is not the lack of smoothness.

\begin{prop}\label{NoTrace}
Let $r \in (0, 1/2)$.
There exists a continuous monotone driving function $\lambda : [0,1] \to \mathbb{R}$ 
with $\l(1)=0$ and $\l(t) \geq a (1-t)^r$,  where $a>0$, 
 such that the corresponding Loewner hull $K_t$ is a simple curve for $t \in [0,1)$, but $K_1$ does not have a trace.
 \end{prop}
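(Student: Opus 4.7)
The plan is to construct $\lambda$ by concatenating countably many elementary pieces on a partition $\{[s_n, s_{n+1}]\}_{n \geq 0}$ of $[0,1]$ with $s_n = 1 - b^n \nearrow 1$ for some $b \in (0,1)$.  On each $I_n = [s_n, s_{n+1}]$, $\lambda$ will consist of a short monotone transition on $[s_n, s_n + \delta_n]$ from the value $\beta_{n-1}$ down to $\beta_n$, followed by a plateau $\lambda \equiv \beta_n$ on $[s_n + \delta_n, s_{n+1}]$.  Setting $\beta_n = a(1-s_n)^r = a b^{nr}$ ensures the lower bound $\lambda(t) \geq \beta_n \geq a(1-t)^r$ on $I_n$; continuity, monotonicity, and $\lambda(1) = 0$ are immediate from the construction.

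By the concatenation property of the Loewner equation, the trace segment generated on the plateau of $I_n$ is the preimage under $g_{s_n+\delta_n}^{-1}$ of a vertical slit in $\mathbb{H}$ of length $2\sqrt{s_{n+1}-s_n-\delta_n}$ emanating from $\beta_n$.  The direction and length of this preimage curve in the original half-plane depend on the conformal map $g_{s_n+\delta_n}^{-1}$ near $\beta_n$, which in turn depends on the shape of the preceding transition piece and, inductively, on the entire hull built so far.  The strategy is to choose the transitions inductively so that the sequence of tips $\{\gamma(s_n)\}$ has at least two distinct accumulation points.  By the argument at the end of the proof of Proposition \ref{trace}, monotonicity of $\lambda$ forces all limit points of $\gamma$ to lie in $\mathbb{R}$, so the two accumulation points will be real points near the cusp point $p$ given by Theorem \ref{tangentialapproach}.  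Having two distinct accumulation points then precludes the existence of $\lim_{t \to 1} \gamma(t)$, so $K_1$ has no trace.

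Simplicity of $\gamma$ on $[0, T]$ for each $T < 1$ will follow by taking the transitions to be smooth enough that $\lambda$ is $C^\infty$ on $[0, T]$, invoking Theorem \ref{smoothtrace} to get a smooth trace on $(0, T]$, and supplementing with a geometric separation argument: each plateau produces a vertical slit whose pullback to the original half-plane is well separated in harmonic measure from the previous pieces.  The author's remark that the construction can be carried out with $\lambda \in C^2[0,1)$ suggests this smoothing step is routine.

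The main obstacle is rigorously implementing the inductive alternation of the tip locations: given the hull constructed up to time $s_n + \delta_n$, I need to show that the next transition piece can be chosen so that $\gamma(s_{n+1})$ lands in a prescribed one of two disjoint target neighborhoods, while respecting monotonicity, the lower bound $\lambda \geq a(1-t)^r$, and the predetermined endpoint values $\beta_{n-1}$ and $\beta_n$.  I expect this to be handled via a continuity and intermediate-value argument: as one varies a shape parameter of the transition (for instance $\delta_n$ or the rate of descent), the tip $\gamma(s_{n+1})$ moves continuously, and in two limiting choices of the parameter the tip can be shown to lie in two different regions.  Tracking this freedom through the accumulating geometric constraints from all earlier pieces is the most delicate part of the plan.
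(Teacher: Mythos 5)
Your overall plan coincides with the paper's: build a monotone driver on a sequence of intervals shrinking to $t=1$, alternating between plateaus and short transitions, keep the lower bound $a(1-t)^r$, and show that the limit set of $\gamma$ as $t\to 1$ is a non-degenerate continuum in $\mathbb{R}$. You correctly identify the inductive "force the oscillation" step as the crux and correctly borrow the monotonicity argument from the end of the proof of Proposition~\ref{trace} to push the limit set onto $\mathbb{R}$.

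The genuine gap is that you leave the crux unspecified. You frame it as steering the tip $\gamma(s_{n+1})$ into one of two prescribed target neighborhoods via an intermediate-value argument, but you offer no mechanism for producing the two limiting positions, and it is not clear the tip can be controlled in that way. The paper's mechanism is different and concrete, and it is what actually makes the argument close. One fixes a single diagonal barrier segment $\beta_0$ issuing from $\lambda(0)$ into $\mathbb{H}$. On each $I_n$, a plateau of length close to $|I_n|$ forces the subsequent linear piece to be very steep; the concatenation property plus an explicit computation show that, in the $g$-image, the newly created trace arc converges (as the plateau lengthens) to a real interval of fixed length $\geq 2$, which then necessarily crosses the image of the current barrier $\beta_{n}$, whose near-endpoint sits within distance $2\sqrt{s}<\sqrt{2}$ of the driving value. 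Pulling back, $\gamma$ crosses a nested sequence $\beta_n\subset\beta_0$ accumulating at $\lambda(0)$; combined with the trace's approach to the far left endpoint $p$ of $K_1\cap\mathbb{R}$ (guaranteed by Lemma~\ref{capture}), this yields two distinct real accumulation points. Without something like this "cross a fixed barrier" argument, the inductive step in your proposal is a plan rather than a proof.

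Two smaller points. First, the order of pieces matters less, but you have "transition then plateau" whereas the paper uses "plateau then transition," and the latter is what makes the steepness of the transition grow as the plateau lengthens within a fixed subinterval. Second, Theorem~\ref{smoothtrace} gives regularity of the trace, not simplicity; to conclude $\gamma$ is simple on $[0,T]$ you would still need Theorem~\ref{cis4} or the explicit form of traces for piecewise constant/linear drivers, so the $C^\infty$ smoothing detour does not buy what you want it to.
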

 
 \begin{proof}
The driving function $\l$ will be constructed to alternate between constant and linear portions, as pictured in Figure \ref{NoTraceDriver}.
In particular, each interval of the form $I_n= [1-2^{-n}, 1-2^{-(n+1)}]$ is divided into two subintervals .  On the first subinterval,  $\l$ is constant, equal to $ 2^{-nr}a$, where $a$ satisfies $a \geq 2/(1-2^{-r})$.  On the second subinterval, $\l$ is linear.  
Since we require that $\l$ is continuous, choosing the slope of the linear piece will uniquely identify $\l$ on $I_n$.
For $t<1$, this construction will give a  simple curve $\gamma$ in $\mathbb{H} \cup \{a\}$ so that $K_t = \gamma[0,t]$. 
Let $\beta_0$ be the line segment $\{x+iy \, : \, x \in [a-1,a], y = a-x \}$. 
We will construct a nested sequence of subintervals $\beta_n$ converging to $a$ and we will choose the slopes of the linear portions to guarantee that $\gamma$ intersects each $\beta_n$.
This will show that the limit points of $\g$ as $t \to 1$ is an interval in $\mathbb{R}$.

\begin{figure}
\centering
\includegraphics[scale=.7]{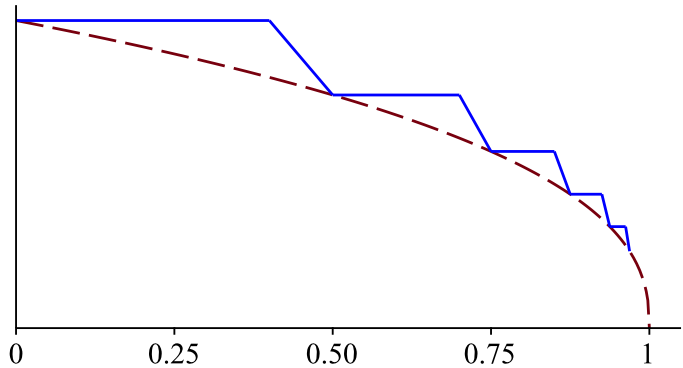}
\caption{The beginning of the driving function $\l$ of Proposition \ref{NoTrace} is shown in solid blue.  It is bounded by the dashed curve $a(1-t)^r$.
} \label{NoTraceDriver}
\end{figure}

We begin with the first interval $I_0$.  Let $s \in (0,1/2)$, and let $\l \equiv a$ on $[0,s]$, 
which implies $\gamma[0,s]$ is the vertical slit from $a$ to $a + i2\sqrt{s} $.  
Applying $g_s$,  the curve $g_s(\beta_0)$ has an endpoint at $a-2\sqrt{s}$. 
For $t \in [s, 1/2]$, we set $\l(t) = m_s(t-s)+a$, where $m_s = \frac{a(2^{-r}-1)}{1/2-s}$ and we let $\gamma_s$ be the Loewner curve generated by $\l$ restricted to $[s,1/2]$.  This curve begins at $a$ and moves to the left.  Making $s$ closer to $1/2$ increases the slope $m_s$, which in turn makes $\g_s[s,1/2]$ closer to $\mathbb{R}.$
As $s \to 1/2$, $\gamma_s[s,1/2]$ converges to the real interval $[2^{-r}a, a]$ of length $a(1-2^{-r})\geq 2$.  
Since the distance from $a$ to the endpoint of $g_s(\beta_0)$ is $2\sqrt{s}<\sqrt{2}$, we are able to choose $s$ close enough to 1/2, so that $\gamma_s[s,1/2]$ intersects $g_s(\beta_0)$.
This gives us our definition of $\l$ on $[0,1/2]$ and we set $\beta_1$ to be the connected component of $\beta_0 \setminus \gamma[0,1/2]$ containing $a$.
Note that we may assume that $g_{1/2}(\beta_1)$ is as close to $2^{-r}a$ as we like (by simply taking $s$ closer to $1/2$, if needed.)
The construction for subsequent intervals is similar.

\end{proof}

Despite the lack of trace, we note that Theorem \ref{tangentialapproach} still applies to the above example.
We end this section by discussing two further examples where we can apply Theorem \ref{tangentialapproach} but which lack the regularity of Proposition \ref{trace}.  It is currently unknown whether either has a trace curve.

The first example behaves similarly to the driving function of Proposition \ref{NoTrace} in that it is monotone and has periods where it is constant.  In particular, we are interested in the driving function $k\sqrt{1-E_t}$, where $E_t$ is an inverse $\alpha$-stable subordinator.  See Figure \ref{sqrtTimeChange}.
In \cite{KLS} with Kobayashi and Starnes we looked at random time-changed driving functions of the form $\phi(E_t)$, and as an application of our results,  we showed that when $\alpha > 1/2$, then  a.s.~$k\sqrt{E_t}$ generates a trace curve that leaves the real line tangentially.  
Analyzing $k\sqrt{1-E_t}$ is more difficult.  When $\alpha > 1/2$, the work of \cite{KLS} shows that a.s.  $k\sqrt{1-E_t}$ generates a trace curve on $[0,T)$, before the final time $T$.  When the hull includes points from the real line, then Theorem \ref{tangentialapproach} gives the tangential behavior of the final hull.  However, the question remains open whether the trace exists on the full time interval $[0,T]$.

\begin{figure}
\centering
\includegraphics[scale=.7]{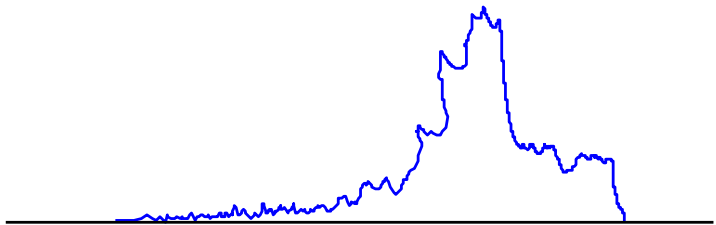}
\caption{
A simulation, courtesy of Andrew Starnes, of the Loewner trace driven by
 $\sqrt{1-E_t}$, where $E_t$ is an inverse $\alpha$-stable subordinator with $\alpha = 0.7$.
} \label{sqrtTimeChange}
\end{figure}

The second example comes from the family of Weierstrass functions
$$W(t) = W_{b,r,k}(t) = k \sum_{n=0}^\infty b^{-rn} \cos(b^n t).$$
The Loewner hulls driven by $W$ have been studied in the $r=1/2$ case (see \cite{LR}, \cite{G}, \cite{ZZ}.)  When $r \in (0,1/2)$ (and $k$ large enough), then we enter the situation in which Theorem \ref{tangentialapproach} applies.  A simulation of one such example is shown in Figure \ref{Wthirdroot}.
The tangential behavior on the left side of the hull is due to Theorem \ref{tangentialapproach}, whereas the tangential behavior on the right side of the hull is due to Proposition 1.2 of \cite{KLS}.  
We also note that since the simulation that produced Figure \ref{Wthirdroot} creates a trace that approximates the hull,  this picture suggests that the hull may be a spacefilling curve (and the few white spots are most likely approximation error),
but it is unknown whether the trace exists for this example.

\vspace{0.1in}

\noindent {\bf Acknowledgement:}  We thank Andrew Starnes for his comments.

%

\end{document}